\newtheorem{theorem}{Theorem}[section]
\newtheorem{prop}{Proposition}[section]
\newtheorem{corollary}{Corollary}[section]
\newtheorem{lemma}[theorem]{Lemma}
\newtheorem{problem}[theorem]{Problem}
\newcommand{\Z}{\mathbb{Z}}
\newcommand{\R}{\mathbb{R}}
\title{Dynamic dimensional reduction in the Abelian sandpile}
\author{Ahmed Bou-Rabee}
\address{Ahmed Bou-Rabee\\
  Department of Statistics\\
  University of Chicago\\
  5747 S. Ellis Avenue\\
  Chicago, IL 60637, USA}
  \subjclass[2010]{82C27, 37B15, 39A12}
\keywords{Abelian sandpile, dimensional reduction, parabolic least action principle, discrete regularity.}
\begin{document}
	
	\begin{abstract}
		We prove the dimensional reduction conjecture of Fey, Levine, and Peres (2010) on the hypercube. 
		The proof shows that dimensional reduction, symmetry, and regularity of the Abelian sandpile persist during the parallel toppling process. This stronger result verifies empirical observations first documented by Liu, Kaplan, and Gray (1990).  %(Geometry and dynamics of deterministic sandpiles). 
	\end{abstract}
	\maketitle
	
	\section{Introduction}
	
	\subsection{Overview} 
	Let $\mathcal{C}_N^{(d)}$ be a hypercube of side length $N$ centered at the origin in the $d$-dimensional integer lattice $\Z^d$.  A {\it sandpile} is a function $s: \mathcal{C}_N^{(d)} \to \Z$. Start with $2d$ chips in the hypercube,  $s = s_0^{(d)} \equiv 2d$, then iterate the following rule: every site with at least $2d$ chips on it becomes {\it unstable} 
	and {\it topples} in {\it parallel}, simultaneously giving one chip to each of its $2d$ neighbors. If a boundary site topples, it loses chips over the edge. Eventually every site is stable and the process stops, yielding a sequence of sandpiles over time $\{s_t^{(d)}\}_{t \geq 0}$ \cite{bak1987self, dhar1990self, holroyd2008chip,levine2010sandpile,levine2017laplacian,jarai2018sandpile, klivans2018mathematics}.
	
	Simulations suggest an exact relationship between these sandpiles across size $N$, time $t$, and dimension $d$ --- see Figures \ref{fig:2d} and \ref{fig:3d}. 
	Specifically, it appears that smaller sandpiles are embedded in larger sandpiles of the same dimension at certain times. Moreover, 
	central cross sections of $d$-dimensional sandpiles coincide almost exactly with $(d-1)$-dimensional sandpiles for all time.  
	
	In this article, we provide a rigorous proof of these observations, {\it self-similarity} and {\it dimensional reduction}, 
	via a simultaneous induction on all parameters: size, time, and dimension. Along the way, we develop new techniques for analyzing the parallel toppling process which may be of independent interest.  %The proof strategy (and result) is inspired by  elliptic PDE regularity but the paper is entirely self-contained and the arguments are discrete. 

	\begin{figure}[!hb]
		\begin{centering}
			\includegraphics[width=0.22\textwidth]{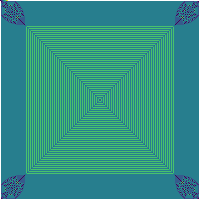}
			\includegraphics[width=0.22\textwidth]{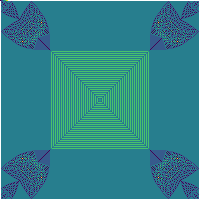} 
			\includegraphics[width=0.22\textwidth]{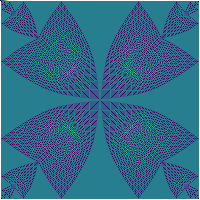}
			\includegraphics[width=0.22\textwidth]{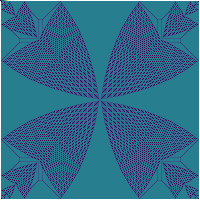}
		\end{centering}
		\caption{ $s_t^{(2)}$ for $t = (25)^2, (50)^2, (100)^2, \infty$,  where $N=200$ and $s_0 \equiv 4$. Sites with $0,\ldots,7$ chips 
			are represented by different colors.  \label{fig:2d} }
	\end{figure}
	
	\begin{figure}[tbp] 
		\begin{centering}
			\includegraphics[width=.22\textwidth]{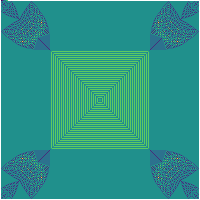}
			\includegraphics[width=.22\textwidth]{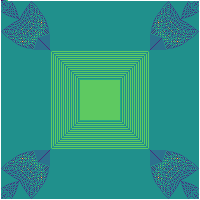}
			\includegraphics[width=.22\textwidth]{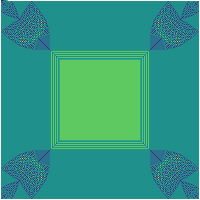}
			\includegraphics[width=.22\textwidth]{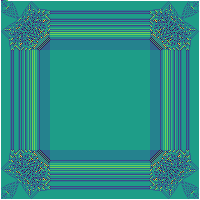}
		\end{centering}
		\setlength{\belowcaptionskip}{-20pt} 
		\caption{ $s_t^{(3)}(\cdot, \cdot, \mbox{offset})$ for 
			$\mbox{offset} = 0, 20, 40, 60$, where $t = (50)^2$, $N=200$, and $s_0 \equiv 6$.  Sites with $0,\ldots,11$ chips are represented by different colors and `offset' 
			is distance to the center of $\mathcal{C}_N^{(3)}$. \label{fig:3d} }
	\end{figure}
	Dimensional reduction in sandpiles has been known experimentally since at least the work of Liu, Kaplan, and Gray in 1990 \cite{liu1990geometry}.
	In 2010, Fey, Levine, and Peres formulated an approximate dimensional reduction conjecture in the context of the single-source sandpile \cite{fey2010growth} --- this was later highlighted in a survey by Levine and Propp \cite{levine2010sandpile}.  Soon after, Pegden and Smart, in a breakthrough, used the theory of viscosity solutions to show that the large-scale patterns which appear in sandpiles can be described by a limiting {\it sandpile PDE} \cite{pegden2013convergence}. This led to a series of works, joint with Levine, which provide a remarkable understanding of sandpiles on $\Z^2$ \cite{levine2017apollonian, levine2016apollonian, pegden2020stability}. 
	
	The structure of the sandpile PDE reveals that on torus-like domains one can construct limit 
sandpiles in $\R^d$ from those in $\R^{d-1}$.  This is also possible for certain random sandpiles \cite{bou2021convergence}. However, it has remained completely open to prove (or disprove) dimensional reduction for any natural example, on either the lattice or the continuum, till now. 

	Our proof in this paper leverages discrete techniques to establish dimensional reduction on the hypercube when $s_0 \equiv 2d$. The main insight is recognizing that the parallel toppling process together with strong induction can be used to control the sandpile as it stabilizes.  We identify a delicate interplay between the initial condition and the symmetry of the hypercube which forces the `flow' of parallel toppling to preserve dimensional reduction. In fact, we do not know how to prove dimensional reduction for only the terminal sandpile. Our proof involves no technology from viscosity solutions nor knowledge of the sandpile PDE.
	
	A key step in the proof is the application of certain discrete derivative bounds  --- Corollary \ref{cor:derivative_bound} and Lemma \ref{topple_limit} below. The strength of these bounds deteriorates as the initial condition grows and it is only when $s_0 \equiv 2d$ that they are strong enough to ensure dimensional reduction. 
	
	This dependence on the initial condition provides some explanation for what occurs in the single-source version. Fey-Levine-Peres conjectured 
	that when 	the initial condition is $(2 d- 2 ) + n \delta_0$  dimensional reduction occurs after carving out a region near the origin. This is likely because 
	our derivative bound (which, in the single-source version, has a flipped dependence on the constant background) does not hold near the origin, but only holds away from it.  Proving this remains open, but we believe the methods in this paper extend to 
	this and other initial data. 
	
	In fact, the result in this paper appears to be a special case of a more general one which we cannot yet prove: dimensional reduction occurs for any uniform initial condition 
	in high enough dimensions. Specifically, for integer $k \geq 0$, when $s_0 \equiv 2d + k$, above a critical dimension, $d > d_0 := k+1$, exact dimensional reduction, modulo $(d_0-1)$-dimensional defects, appears to persist throughout the parallel toppling process. We show that when $d = d_0$, dimensional reduction fails to occur, providing one explanation for why $(d_0-1)$-dimensional defects appear --- see Table \ref{tab:strong_dim1}.

	\subsection{Main Result}
	Our proof begins with the {\it odometer} $v_t$, which encodes the number of topples at each site over time. Let $v_0 : \mathcal{C}_N^{(d)} \to \mathbb{Z}^+$ be the initial odometer $v_0 \equiv 0$; then, recursively, 
	\begin{align}
	v_{t+1} &= v_{t} +  1\{ s_{t} \geq 2 d \}, \\
	s_{t+1} &= s_{t} + \Delta ( v_{t+1} - v_{t} ) ,
	\end{align}
	where $\Delta$ is the graph Laplacian on $\mathcal{C}_N^{(d)}$ with dissipating boundary conditions.  Dependence on $d$ and $N$ is indicated by writing $v_t^{(d,N)}$ and $s_t^{(d,N)}$. %When dependence is clear from the context, the superscripts are dropped. 

	We prove dimensional reduction and self-similarity of sandpiles
	via an analysis of the parallel toppling odometers.  It will be more convenient to state the result after making a symmetry reduction. 
	 Let $\operatorname{Aut}_{\mathcal{C}_d}$ denote the group of symmetries of the $d$-dimensional hypercube and let it act on $\Z^d$ by matrix-vector multiplication.
	 The definitions imply that parallel toppling preserves the symmetries of the hypercube: 	$v_t^{(d,N)}(\mathbf{x}) = v_t^{(d,N)}( \sigma \mathbf{x})$ for all $t \geq 1$, $\mathbf{x} \in \mathcal{C}_N^{(d)}$, and $ \sigma \in \operatorname{Aut}_{\mathcal{C}_d}$ (we give a simple proof in Section \ref{subsec:symmetry} below).
	 Hence the odometer and sandpile are fully determined by their restrictions to
	a fundamental domain of the hypercube --- we choose a particular one in the statement of the theorem.

	\begin{theorem}\label{the_theorem}
		Let $N \geq 1$, $d \geq 2$, and $M = \lceil N/2 \rceil$. Denote the fundamental domain of $\mathcal{C}_N^{(d)}$
		consisting of sorted coordinates in decreasing order by $\mathcal{S}^{(d)}_M := \{ (x_1, \ldots, x_d): M \geq x_1 \geq \cdots \geq x_d \geq 1 \}$.
		
		\begin{enumerate}
			\item{Dimensional reduction:} for all $t \geq 1$ and $\mathbf{x}_{d-1} \in \mathcal{S}_M^{(d-1)}$
			\begin{equation*}
			v_t^{(d, N)}(\mathbf{x}_{d-1}, 1) =  v_t^{(d-1, N)}(\mathbf{x}_{d-1})
			\end{equation*}
			and for $\mathbf{x}_{d-1} \geq 2$ 
			\begin{equation*}
			v_{\infty}^{(d,N)}(\mathbf{x}_{d-1}, 2) =  v_{\infty}^{(d-1,N)}(\mathbf{x}_{d-1}).
			\end{equation*}
			\item{Self-similarity:} for all $j < M$, $t \leq \tau_j$, and $\mathbf{x} \in \mathcal{S}_M^{(d)}$ with $\mathbf{x} > M-j$
			\begin{equation*}
			v_t^{(d, N)}(\mathbf{x}) = v_t^{(d, 2 j)}(\mathbf{x}-(M-j)),
			\end{equation*}
			where 
			\begin{equation*}
			\tau_{j} := \min\{ t \geq 1: v_t^{(d,2 j)}(\mathbf{x}) \geq  j  \mbox{ for $\mathbf{x} \in \partial \mathcal{C}^{(d)}_{2 j}$} \},
			\end{equation*}
			and $\partial \mathcal{C}^{(d)}_i := \{ \mathbf{x} \in \mathcal{C}^{(d)}_i : \mbox{there is $\mathbf{y} \not \in \mathcal{C}^{(d)}_i$ with $|\mathbf{y}-\mathbf{x|} = 1$}\}$ denotes the inner boundary of the cube.
			%Moreover, the growing boundaries reflect: for $\mathbf{x}_{d-1} > M-j$, 
			%\begin{equation} \label{eq:reflecting_boundaryD}
			%    v_t^{(d,M)}(\mathbf{x}_{d-1}, (M-j)) = v_t^{(d,M)}(\mathbf{x}_{d-1}, (M-j)+1)
			%\end{equation}
			
		\end{enumerate}

	\end{theorem}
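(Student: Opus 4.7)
My plan is to prove both statements together by a single strong induction on $t$, running through all valid $(N,d)$ at each time. A preliminary step, which I would do first, is to verify that parallel toppling preserves the full hypercube symmetry group $\operatorname{Aut}_{\mathcal{C}_d}$: the initial condition $s_0\equiv 2d$ and the Laplacian with dissipating boundary are both equivariant, so a direct induction on $t$ transfers the symmetry to every $v_t$ and $s_t$. In particular this gives the reflection identity $v_t^{(d,N)}(\mathbf{x}_{d-1},0)=v_t^{(d,N)}(\mathbf{x}_{d-1},1)$, which effectively turns the slice $\{x_d=1\}$ into a Neumann boundary and is the geometric reason dimensional reduction can occur at all. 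The base case $t=0$ is trivial since $v_0\equiv 0$, and the first step is trivial since $s_0\equiv 2d$ forces every site to topple exactly once.

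\textbf{Dimensional reduction step.} Assuming both claims at time $t$, the increment $v_{t+1}-v_t=\mathbf{1}\{s_t\ge 2d\}$ reduces the layer-$1$ identity at time $t+1$ to the agreement of topple indicators. Expanding the $d$-dimensional Laplacian at $(\mathbf{x}_{d-1},1)$, using the reflection identity on the $x_d=0$ neighbor and applying the inductive hypothesis $v_t^{(d,N)}(\mathbf{y},1)=v_t^{(d-1,N)}(\mathbf{y})$ to the transverse neighbors, one obtains
\begin{equation*}
s_t^{(d,N)}(\mathbf{x}_{d-1},1)-2d \;=\; \bigl(s_t^{(d-1,N)}(\mathbf{x}_{d-1})-2(d-1)\bigr) + D_t(\mathbf{x}_{d-1}),
\end{equation*}
where $D_t(\mathbf{x}_{d-1}):=v_t^{(d,N)}(\mathbf{x}_{d-1},2)-v_t^{(d,N)}(\mathbf{x}_{d-1},1)$ is the discrete normal derivative into the cube. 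The two topple indicators agree as long as $D_t$ is controlled tightly enough that it cannot flip the sign across the threshold --- for instance, provided $D_t\ge 0$ on unstable sites and $D_t\le 0$ on stable ones. This is precisely what I would extract from Corollary \ref{cor:derivative_bound} and Lemma \ref{topple_limit}, whose hypotheses are exactly met when $s_0\equiv 2d$. The layer-$2$ identity for $v_\infty$ in the bulk $\mathbf{x}_{d-1}\ge 2$ then falls out of the terminal form of the same derivative estimate, which should force $D_\infty=0$ away from the slice boundary.

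\textbf{Self-similarity step.} Within the same inductive layer, I would compare the $d$-dimensional odometer on the corner region $\{\mathbf{x}\in\mathcal{S}_M^{(d)}:\mathbf{x}>M-j\}$ with the translated odometer on $\mathcal{C}_{2j}^{(d)}$. The stopping time $\tau_j$ is the first time a boundary site of the small cube has toppled $j$ times, which is the delay after which a topple signal from the far face of the small cube, or equivalently (via hypercube symmetry) from its mirror image, can first return to perturb the corner. Before $\tau_j$ the Laplacians of the two cubes agree on the shared corner shell by translation invariance, so the parallel toppling recursion can be run simultaneously on both cubes and yields identical odometers there at time $t+1$, directly from the inductive hypothesis at time $t$. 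Once $t=\tau_j$ the two processes may legitimately diverge, exactly as the theorem permits.

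\textbf{Main obstacle.} The crux of the argument is the feedback loop between dimensional reduction and the derivative bound: dimensional reduction at time $t+1$ requires $D_t$ to satisfy a tight sign-preserving estimate, but that estimate will itself be proved inductively from structural properties --- monotonicity, convexity, and hypercube symmetry --- of the odometer that in turn depend on dimensional reduction and self-similarity already holding at time $t$. The whole chain of implications must be closed in a single simultaneous induction over $(t,N,d)$, and, as the introduction indicates, the slack in the derivative estimate is completely consumed when $s_0\equiv 2d$; this is why the argument is so rigid, why one cannot hope to prove dimensional reduction only at the terminal time without also controlling the flow, and why the method is expected to fail outside the critical-dimension regime.
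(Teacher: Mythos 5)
Your high-level frame is the right one — symmetry reduction to the sorted fundamental domain, a simultaneous induction over $(t,N,d)$, and the identity $s_t^{(d)}(\mathbf{x}_{d-1},1)-2d = \bigl(s_t^{(d-1)}(\mathbf{x}_{d-1})-2(d-1)\bigr)+D_t(\mathbf{x}_{d-1})$ is exactly the reduction the paper performs. But the step where you claim the needed sign control on $D_t$ is ``precisely what I would extract from Corollary \ref{cor:derivative_bound} and Lemma \ref{topple_limit}'' is where the proof actually lives, and those two results do not deliver it. Corollary \ref{cor:derivative_bound} with $k=1$ gives only $v_t(\mathbf{x}_{d-1},1)-v_t(\mathbf{x}_{d-1},2)\leq 1$, so together with axis monotonicity you get $D_t\in\{-1,0\}$ — off by exactly the one critical unit. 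When $D_t=-1$ at a site where the $(d-1)$-dimensional slice is marginally unstable ($s_t^{(d-1)}=2(d-1)$), the indicators disagree and dimensional reduction breaks. The paper closes this gap with the \emph{facet compatibility} implications of Proposition \ref{base_dimension}: if $D_t=-1$ then the layer-$1$ site does \emph{not} topple at step $t+1$ (weak) and the layer-$2$ site \emph{does} (strong), restoring $D_{t+1}=0$. Proving these requires a further second-difference bound $v_t(\mathbf{x})-v_t(\mathbf{x}+2e_i)\leq 2$, the strong topple control $v_{t+2}-v_t\leq 1$ after $\tau_{M-1}$, and a genuinely nontrivial base case: the full regularity package in dimension $d_0=1$ (Section \ref{sec:base_regularity}) seeded by the explicit $M=2$ solution (Section \ref{sec:base_case}). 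None of this machinery is present or replaceable by the two lemmas you cite; you have correctly named the feedback loop as the main obstacle but not broken it.

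A second, more localized error: in the self-similarity step you assert that before $\tau_j$ ``the Laplacians of the two cubes agree on the shared corner shell by translation invariance.'' They do not. At the inner face of the corner region, e.g.\ at $\mathbf{x}$ with $x_d=M-j+1$, the large cube sees a genuine neighbor at $x_d=M-j$ outside the region, while the translated small cube sees its own reflecting boundary (the neighbor folds back onto $\mathbf{x}'$ itself). The two Laplacians differ by exactly the terms $v_{t}^{(M)}(\mathbf{x})-v_{t}^{(M)}(\mathbf{x}-e_i)$, and showing these cause no discrepancy in the topple indicator is again done via strong facet compatibility (Case 2 of the self-similarity argument in Section \ref{sec:dim_reduction}). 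Your heuristic for $\tau_j$ as the return time of a ``topple signal'' also does not match its actual role, which is to guarantee $v_t(M,\mathbf{1}_{d-1})\leq M-1$ so that the hypothesis of Corollary \ref{cor:derivative_bound} holds with $k=1$ and the derivative bounds remain valid on the dissipating boundary.
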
 

	Both of these results immediately translate to the sandpile. 
	\begin{corollary} 
		% For each integer $k \geq 0$, the following hold for all $d \geq d_0 := (k + 2)$.
	
		The results in Theorem \ref{the_theorem}, using the same notation, extend to the sequence of sandpiles. 	
		%The following holds, in the same setting as 
		\begin{enumerate}
			\item{Dimensional reduction:} for all $\mathbf{x}_{d-1} \in \mathcal{S}_M^{(d-1)}$ with $\mathbf{x}_{d-1} \geq 2$
			\[
			s_{\infty}^{(d,N)}(\mathbf{x}_{d-1},1)  = s_{\infty}^{(d-1,N)}(\mathbf{x}_{d-1})+2.
			\]
			\item{Self-similarity:} for all $j < M$, $t \leq \tau_j$, and $\mathbf{x} \in \mathcal{S}_M^{(d)}$ with $\mathbf{x} > M-j+1$
			\[
			s_t^{(d,N)}(\mathbf{x}) = s_t^{(d,2 j)}(\mathbf{x}-(M-j)).
			\]
			\end{enumerate}

	\end{corollary}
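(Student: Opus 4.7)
The plan is to reduce both assertions to Theorem \ref{the_theorem} via the identity $s_t = 2d + \Delta v_t$, which follows by induction on $t$ from $s_0 \equiv 2d$ and the update rule $s_{t+1} - s_t = \Delta(v_{t+1} - v_t)$. Each assertion then becomes an identity between the discrete Laplacians of two odometers at corresponding points, which can be verified term-by-term.

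For dimensional reduction, I would split the $d$-dimensional Laplacian of $v_\infty^{(d,N)}$ at $(\mathbf{x}_{d-1}, 1)$ into its horizontal contributions (neighbors varying in coordinates $1, \ldots, d-1$) and its vertical contributions (neighbors varying in coordinate $d$). The first identity of Theorem \ref{the_theorem}(1), together with the $(d-1)$-dimensional cube symmetries common to $v^{(d,N)}(\cdot, 1)$ and $v^{(d-1,N)}$, converts the horizontal block into $\Delta_{d-1} v_\infty^{(d-1,N)}(\mathbf{x}_{d-1})$. The vertical block reads
\[
v_\infty^{(d,N)}(\mathbf{x}_{d-1}, 0) + v_\infty^{(d,N)}(\mathbf{x}_{d-1}, 2) - 2\, v_\infty^{(d,N)}(\mathbf{x}_{d-1}, 1),
\]
and vanishes whenever $\mathbf{x}_{d-1} \geq 2$: both $v_\infty^{(d,N)}(\mathbf{x}_{d-1}, 0)$ and $v_\infty^{(d,N)}(\mathbf{x}_{d-1}, 2)$ coincide with $v_\infty^{(d-1,N)}(\mathbf{x}_{d-1}) = v_\infty^{(d,N)}(\mathbf{x}_{d-1}, 1)$, the first via the reflection symmetry of $\mathcal{C}_N^{(d)}$ combined with Theorem \ref{the_theorem}(1), the second by the second identity of Theorem \ref{the_theorem}(1). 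Hence $\Delta v_\infty^{(d,N)}(\mathbf{x}_{d-1}, 1) = \Delta_{d-1} v_\infty^{(d-1,N)}(\mathbf{x}_{d-1})$, and the difference of backgrounds $2d - 2(d-1) = 2$ yields the claimed shift.

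For self-similarity, I fix $\mathbf{x} \in \mathcal{S}_M^{(d)}$ with $\mathbf{x} > M - j + 1$ and set $\mathbf{y} = \mathbf{x} - (M-j)$, and inspect each of the $2d$ neighbors $\mathbf{x} \pm e_i$ individually. Because every coordinate of $\mathbf{x}$ strictly exceeds $M - j + 1$, each such neighbor either (i) still lies in $\mathcal{C}_N^{(d)}$ and has every coordinate strictly greater than $M - j$, so that Theorem \ref{the_theorem}(2), combined with permutation symmetries when the neighbor has left the fundamental domain, matches $v_t^{(d,N)}(\mathbf{x} \pm e_i)$ with $v_t^{(d,2j)}(\mathbf{y} \pm e_i)$; or (ii) lies outside $\mathcal{C}_N^{(d)}$, which occurs precisely when some $\mathbf{x}_i = M$, in which case the shifted neighbor $\mathbf{y} + e_i$ has coordinate $j+1$ and therefore lies outside $\mathcal{C}_{2j}^{(d)}$ as well. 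In (i) both Laplacians receive equal odometer values and in (ii) both receive $0$ by dissipation, so the contributions agree pointwise. Summing, $\Delta v_t^{(d,N)}(\mathbf{x}) = \Delta v_t^{(d,2j)}(\mathbf{y})$, and the sandpile identity follows immediately since both sandpiles have the same background $2d$.

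The only point requiring real care is the boundary bookkeeping in the self-similarity step. The strict condition $\mathbf{x} > M - j + 1$ assumed by the corollary, one unit stronger than the odometer's $\mathbf{x} > M - j$, is precisely what synchronizes ``$\mathbf{x} + e_i$ leaves $\mathcal{C}_N^{(d)}$'' with ``$\mathbf{y} + e_i$ leaves $\mathcal{C}_{2j}^{(d)}$''. Without this extra unit of buffer, a neighbor of $\mathbf{x}$ could sit on the non-dissipating face of the fundamental domain while its shifted image lands strictly inside $\mathcal{C}_{2j}^{(d)}$, leaving an unmatched Laplacian term. Beyond this observation I do not expect any further obstacle.
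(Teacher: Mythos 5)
Your proposal is correct and is essentially the paper's own argument: the paper treats the corollary as an immediate consequence of Theorem \ref{the_theorem} via $s_t = s_0 + \Delta v_t$, with the domains ``shrunk by one'' exactly so that every neighbor appearing in the Laplacian is covered by the odometer identities (together with the reflection/permutation symmetries of Lemma \ref{symmetry_lemma}). Your term-by-term verification, including the observation that the vertical second difference vanishes and that the extra unit of buffer synchronizes dissipation across the two cubes, fills in the same bookkeeping the paper leaves implicit.
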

	
	The first part of Theorem \ref{the_theorem} states that the parallel toppling odometer restricted to a center slice of a $d$-dimensional hypercube coincides 
	with the $(d-1)$-dimensional parallel toppling odometer for all time.  Dimensional reduction holds for the final odometer off the center, implying dimensional reduction 
	on the center for the final, stable sandpile. 
	
	The second part of the theorem relates the parallel toppling odometers for different sized cubes: 
	the odometer for the size $N$ cube contains the size $2 j$ cube odometer up until the first time a boundary site exceeds $j$ topples. These automatically imply, after shrinking the domains by one, the same for the sequence of sandpiles.

	\begin{table}      
		\begin{tabular}{cccccc}
			\toprule
			$k$ & 0  & 1 & 2 & 3 & 4 \\
			\midrule
			$s_{\infty}^{(2)}(\cdot, \cdot)$ & \includegraphics{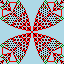}  & \includegraphics{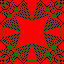} & \includegraphics{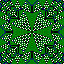}   & \includegraphics{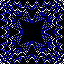}  &  \includegraphics{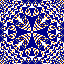} \\
			$s_{\infty}^{(3)}(\cdot, \cdot,1)$ & \includegraphics{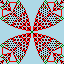}  & \includegraphics{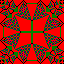} & \includegraphics{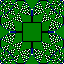}   & \includegraphics{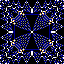}  &  \includegraphics{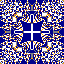} \\
			$s_{\infty}^{(4)}(\cdot, \cdot,1,1)$ & \includegraphics{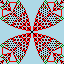}  & \includegraphics{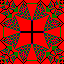} & \includegraphics{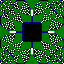}   & \includegraphics{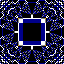}  &  \includegraphics{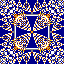} \\
			$s_{\infty}^{(5)}(\cdot, \cdot,1,1,1)$ & \includegraphics{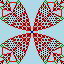}  & \includegraphics{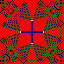} & \includegraphics{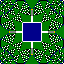}   & \includegraphics{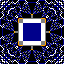}   &  \includegraphics{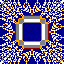} \\
			$s_{\infty}^{(6)}(\cdot, \cdot,1,1,1,1)$ & \includegraphics{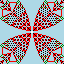}  & \includegraphics{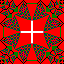} & \includegraphics{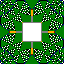}   & \includegraphics{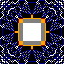}   &  \includegraphics{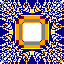} \\
			\bottomrule
		\end{tabular}
		\caption{Center slices of terminal sandpile configurations for $s_0 \equiv 2 d + k$ and $N = 64$. Site colors are normalized by column so that in dimension $d$ a site with $z$ chips has the same color as a site with $(z-2)$ chips in dimension $(d-1)$.}
		\label{tab:strong_dim1}
	\end{table}

	As mentioned previously, we expect Theorem \ref{the_theorem} to be a special case of a more general result. 
	In Section \ref{sec:base_case} we show that dimensional reduction does not occur when $s_0 \equiv 2 d + (d-1)$ 
	 in all dimensions $d \geq 2$ when $N=2$. We also provide an explicit description of the parallel toppling odometer 
	when $s_0 \equiv 2 d + (d-1)$ and $N=4$ for all $d \geq 1$. This explicit form suggests that the proof template in this paper 
	may help with the following. 
	
	\begin{problem} \label{prob:true_theorem}
	 Show that Theorem \ref{the_theorem} holds when $s_0 \equiv 2 d + k$
	 for all $k \geq 0$ and $d > d_0 := (k+1)$. 
	\end{problem}
	
	We expect an even stronger result to be true, although it is likely the proof will require techniques beyond
	those presented here. In simulations, exact dimensional reduction appears to occur away from the central slice. For example, when $s_0 \equiv 4$, $d=2$, and $N$ is large, the center of the sandpile
	contains large curved triangles of 3s. In fact, for every dimension and size we could simulate, 
	whenever dimensional reduction occurs along the central cross sections of the hypercube, it extends; see Figure \ref{fig:strong_reduction} for an example in three dimensions.
	
		\begin{figure}[tbp]
		\centering
		\fbox{\includegraphics[width = 0.44 \textwidth]{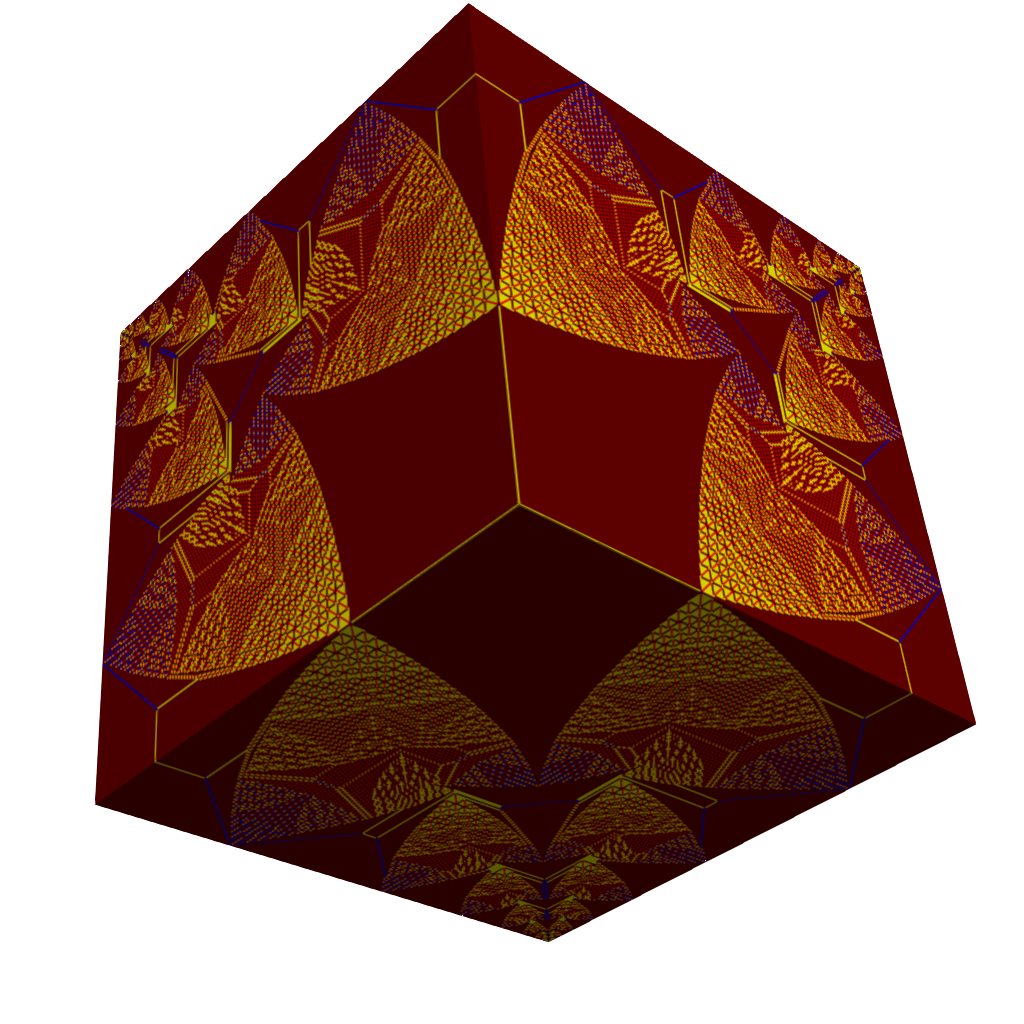}}
		\fbox{\includegraphics[width = 0.44 \textwidth]{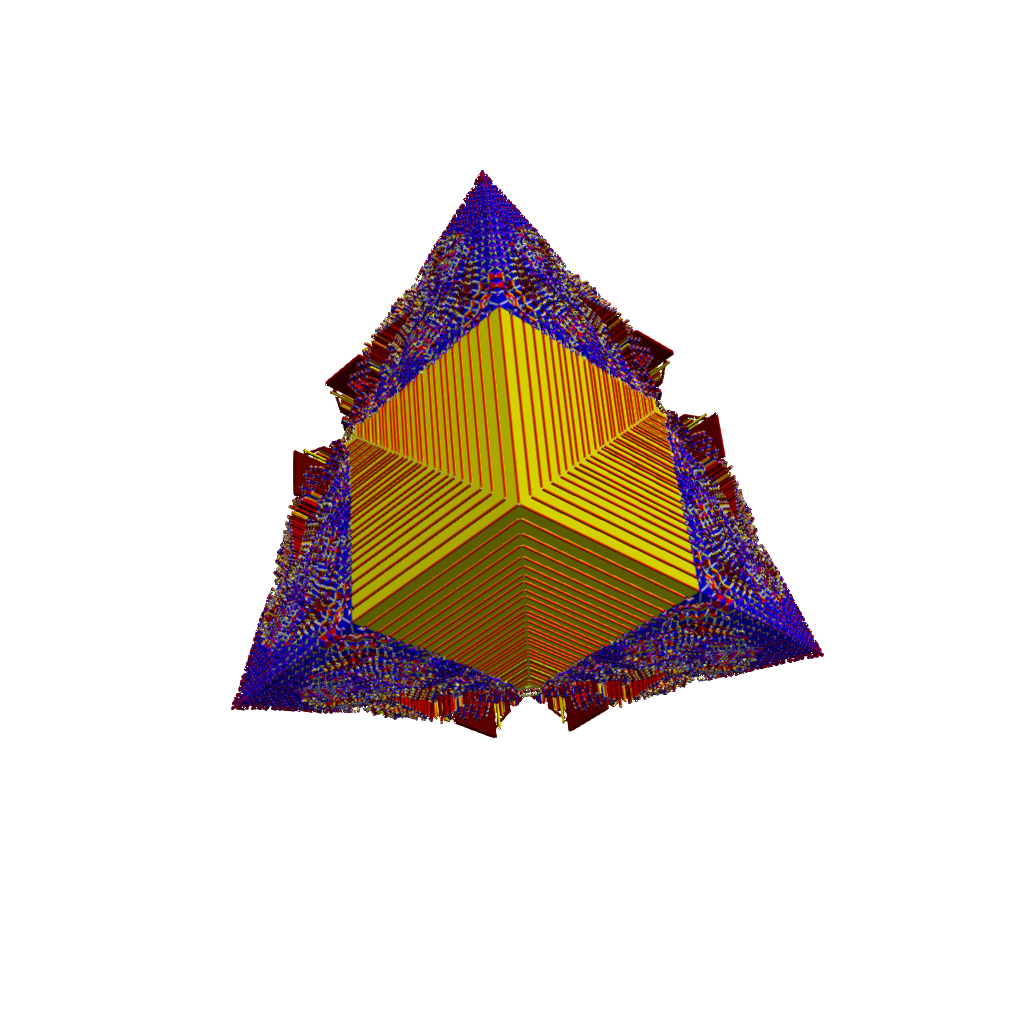}}
		\caption{On the left, a corner cube of $s_{\infty}^{(N,3)}$ and on the right the same corner with all cross sections which match 
			$s_{\infty}^{(N,2)}$ removed. The initial condition is $s_0 \equiv 7$ and $N=2^{11}$. Sites with $0,\ldots, 5$ chips are represented by different colors.}
		\label{fig:strong_reduction}
	\end{figure}

	\begin{problem}
		Extend dimensional reduction on the hypercube to a domain of codimension zero. 
	\end{problem}

	\noindent The following is closely related. 
	\begin{problem} \label{prob:c11}
		Show that the odometer for any bounded initial sandpile on the hypercube has bounded second differences. For instance, show 
		for all $t \geq 1$ and $i=1,2$ that 
		\[
		-3 \leq -2 v_t(\mathbf{x}) + v_t(\mathbf{x} + e_i) +v_t(\mathbf{x} - e_i) \leq 2
		\]
		when $d = 2$ and $s_0 \equiv 4$. 
	\end{problem}	
	Numerical evidence indicates the hypercube is a necessary hypothesis in Problem \ref{prob:c11}. In fact, in most other domains, including the discrete circle, the odometer does not appear to have bounded second differences. 
	On the hypercube, our proof of dimensional reduction shows that the odometer has bounded second differences along the central cross sections; however, a method to propagate those bounds to the interior remains out of reach.

	\subsection{Outline of the proof}
	The proof of Theorem \ref{the_theorem} is a careful induction on hypercube dimension, side length, and time. Some parts of the argument can be simplified 
	but we present it in this fashion to suggest a template for proving dimensional reduction with more general initial data. At a high level, 
	we show that if the parallel toppling process for a $d_0$-dimensional sandpile is sufficiently regular, then dimensional reduction is guaranteed 
	in all dimensions $d > d_0$. We prove this regularity when $d_0=1$; the case $d_0 > 1$ remains open.

	Our main technical tool is a technique introduced by Babai and Gorodezy to prove discrete quasiconcavity of the single-source sandpile odometer in $\Z^2$ \cite{babai2007sandpile}. By an iteration of their technique, we gain symmetry of the odometer, a derivative comparison result, and a parabolic least action principle. These results, which appear in Section \ref{sec:prelims}, extend beyond the hypercube and so may be of independent interest.

	In Section \ref{sec:base_case} we explicitly determine $v_t^{(d,N)}$ when $N=4$ in all dimensions $d \geq 1$ when the initial sandpile is $s_0 \equiv 2 d + d-1$. This is done by 
	mapping the hypercube to a line via a radial decomposition. The explicit form of $v_t$ provides both a base case for our proof and progress towards Problem \ref{prob:true_theorem}.	We also show that when $N=2$ dimensional reduction does not occur at the critical dimension $d_0$. 
	
	The explicit solution when $N=4$ establishes the base case for an odometer regularity result which is then proved in Section \ref{sec:base_regularity}. Finally, in Section \ref{sec:dim_reduction}, we use the established regularity of the odometer in dimension $(d-1)$ to prove dimensional reduction in dimension $d$ 
	
		\subsection*{An efficient algorithm for computing high-dimensional sandpiles}
	In Section \ref{subsec:symmetry} we show that $v_t^{(d,N)}$ can be computed via the parallel toppling procedure restricted to the simplex. In fact, the argument shows that any sandpile with a symmetric initial condition on $\Z^d$, including the single-source sandpile, retains symmetry throughout the parallel toppling process and can be computed in this way. 
	
	For $d$ large, computing sandpiles on the simplex improves space complexity by a factor of $d^d$. Moreover, the reduction in size also leads to a faster algorithm when using parallelization. We wrote a program in Julia \cite{bezanson2017julia, besard2018effective} for computing arbitrary dimensional sandpiles which implements these improvements. The program, which may be freely used and modified, is included in the arXiv post.

	\subsection*{Acknowledgements}	
	It is a pleasure to acknowledge Charles K. Smart for encouraging me to work on this problem and for many valuable conversations throughout.
	Additionally, I thank Lionel Levine for helpful discussions and for sharing past, unpublished, joint work with Alexander Holroyd and Karl Mahlburg towards the dimensional reduction conjecture. The anonymous referees provided detailed feedback which led to a much improved exposition.

	\section{Preliminaries} \label{sec:prelims}

	\subsection{Notation and conventions}
	When we need to distinguish between vectors and scalars, we reserve $\mathbf{x}$
	for vectors and $x$ for scalars. The $i$-th element of $\mathbf{x}$ is $x_i$
	and $\mathbf{x}_{i} = (x_1, \ldots, x_i)$. We refer to coordinate basis vectors of 
	$\Z^d$ as $e_i$ and the ones vector of length $i$ by
	$\mathbf{1}_{i} = (1, \ldots, 1)$. Equalities, inequalities, addition, and multiplication between vectors and scalars are to be understood pointwise. 
	The notation $|x|$ refers to the absolute value and $|\mathbf{x}| = \sum_{i=1}^d |x_i|$ is the $\ell_1$ norm.

	We embed $\mathcal{C}_{N}^{(d)}$ into $\Z^d$ in two different ways depending on whether $N$ is even or odd. If $N = 2 k + 1$,  $\mathcal{C}_N^{(d)} = \{ \mathbf{x} +1 : |x_i| \leq k \}$  otherwise $\mathcal{C}_N^{(d)} = \{ \mathbf{x}-k : 1 \leq x_i \leq 2 k \}$.
	The graph Laplacian on $\mathcal{C}_N^{(d)}$ operates on functions
	$f: \mathcal{C}_N^{(d)} \to \mathbb{R}$ as 
	\begin{equation}
	\Delta^{(d)} f(x) = -2 d f(x) + \sum_{y \sim x} f(y), 
	\end{equation}
	where we pad $f(x) := 0$ for $x \not \in \mathcal{C}_N^{(d)}$ and the 
	sum $y \sim x$ is over the 2d nearest neighbors of $x$, $|y-x| = 1$.  When the hypercube size or dimension is not used, we omit distinguishing sub/superscripts.

	\subsection{Babai-Gorodezky technique} 
	In this subsection and the next, let $s_0 \leq 2 (2 d) - 1$ be an arbitrary initial sandpile on $\mathcal{C}$ and $v_t$ its odometer. A straightforward induction argument and the definition of the graph Laplacian yields the following lemma. 
	\begin{lemma}[] \label{pt_ind_lemma}
		For each $x \in \mathcal{C}$ and all $t \geq 0$, 
		\begin{equation}
		v_{t+1}(x) =   \lfloor \frac{ s_0(x) + \sum_{y \sim x} v_{t}(y) }{2 d} \rfloor.
		\end{equation}
	\end{lemma}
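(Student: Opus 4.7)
The plan is a short induction on $t$ anchored by a uniform chip-count bound. First, telescope the recursion $s_{t+1} = s_t + \Delta(v_{t+1} - v_t)$ from the introduction: since $v_0 \equiv 0$,
\[
s_t(x) \;=\; s_0(x) + \Delta v_t(x) \;=\; s_0(x) - 2d\,v_t(x) + \sum_{y \sim x} v_t(y),
\]
with the convention $v_t(y) = 0$ for $y \notin \mathcal{C}$. Writing $S := s_0(x) + \sum_{y \sim x} v_t(y)$, this reads $s_t(x) = S - 2d\,v_t(x)$, and the toppling rule $v_{t+1}(x) = v_t(x) + \mathbf{1}\{s_t(x) \geq 2d\}$ becomes $v_{t+1}(x) = v_t(x) + \mathbf{1}\{S \geq 2d(v_t(x)+1)\}$. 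Once one knows the two-sided bound
\[
0 \;\leq\; s_t(x) \;<\; 4d
\]
for every $x$ and $t$, it follows that $v_t(x) \leq S/(2d) < v_t(x) + 2$, so $\lfloor S/(2d)\rfloor$ equals $v_t(x)$ precisely when $s_t(x) < 2d$ and $v_t(x) + 1$ precisely when $s_t(x) \geq 2d$ --- exactly matching the toppling rule. The lemma therefore reduces to establishing this chip-count invariant.

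I would prove $0 \leq s_t(x) < 4d$ by induction on $t$. The base case is the standing hypothesis $0 \leq s_0 \leq 4d - 1$ (nonnegativity being implicit in the sandpile setup, and the upper bound being the stated assumption). For the inductive step, unfold one step using $v_{t+1} - v_t = \mathbf{1}\{s_t \geq 2d\}$ to obtain
\[
s_{t+1}(x) \;=\; s_t(x) - 2d\,\mathbf{1}\{s_t(x) \geq 2d\} + \sum_{y \sim x} \mathbf{1}\{s_t(y) \geq 2d\},
\]
and split on whether $x$ topples. If $s_t(x) \geq 2d$, the site sheds $2d$ chips and receives at most $2d$ (one per neighbor), so $s_{t+1}(x) \in [s_t(x)-2d,\, s_t(x)] \subset [0, 4d)$. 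If $s_t(x) < 2d$, the site only receives chips, at most $2d$ of them, so $s_{t+1}(x) \in [s_t(x),\, s_t(x)+2d] \subset [0, 4d)$. This closes the induction, and the floor identity of the lemma follows from the discussion in the previous paragraph.

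There is essentially no obstacle; the argument is careful bookkeeping. The only delicate point is that the bound $s_0 \leq 4d - 1$ is used sharply in the non-toppling case: a single excess chip would allow $s_{t+1}$ to reach $4d$, after which $\lfloor S/(2d)\rfloor$ could exceed $v_t(x) + 1$ and the identification with the toppling rule would break. This is also why the dissipation convention $v_t(y)=0$ off $\mathcal{C}$ is important --- it prevents boundary sites from ``receiving back'' phantom chips and keeps the neighbor sum bounded by $2d$.
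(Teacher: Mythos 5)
Your argument is correct and is exactly the ``straightforward induction'' the paper invokes without writing out: telescope $s_t = s_0 + \Delta v_t$, observe that the identity is equivalent to $\lfloor s_t(x)/(2d)\rfloor = 1\{s_t(x)\geq 2d\}$, and close the loop with the invariant $0 \leq s_t \leq 4d-1$, which is where the standing hypothesis $s_0 \leq 2(2d)-1$ (together with the implicit nonnegativity of $s_0$, which is indeed needed and worth flagging as you did) enters sharply. Nothing is missing.
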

	Babai and Gorodezky used this simple lemma to prove a nontrivial discrete quasiconcavity property of the single-source sandpile in $\Z^d$ \cite{babai2007sandpile}. A more general version of their argument appears below in Lemma \ref{general_difference_comparison}. Roughly, their technique recognizes that if a property of the odometer holds at $t=1$, is consistent across the symmetry axes, and can be verified on the boundaries of the domain, it must hold for all $t \geq 1$.
	
	Lemma \ref{pt_ind_lemma} is used many other times throughout this paper; notably we use it to prove a parabolic least action principle and symmetry of the odometer on the hypercube.

	\subsection{Parabolic least action principle} 
	The least action principle \cite{fey2010growth} shows that $v_\infty$ is minimal among all $w: \mathcal{C} \to \mathbb{Z}^+$ which {\it stabilize} $s_0$: $\Delta w + s_0 \leq 2d -1$. We upgrade this to a parabolic least action principle by observing the parallel toppling procedure as a directed sandpile on $\mathcal{G} = \mathbb{Z}^+ \times \mathcal{C}$. The initial sandpile and odometer over time are stacked,
	$s(t,x) := s_0(x)$ and $v(t,x) := v_t(x)$ for all $t \geq 0$ and $x \in \mathcal{C}$.  The graph Laplacian operates on functions $f:\mathcal{G} \to \mathbb{R}$ as
	\begin{equation}
	\Delta^{\mathcal{G}} f(t,x) = -2 d f(t,x) + \sum_{y \sim x} f(t-1,y),
	\end{equation}
	for $t \geq 1$ and $x \in \mathcal{C}$, where the sum $y \sim x$ is over the nearest neighbors of $x$ in $\mathcal{C}$.

	\begin{lemma}[Parabolic least action principle] \label{parabolic_comparison}
		\begin{equation} \label{eq:parabolic_leastaction}
		v(t,x) = \min\{ u: \mathcal{G} \to \mathbb{Z}^+: \Delta^{\mathcal{G}} u(t,x) + s(t,x) \leq 2d-1 \mbox{ for all $x \in \mathcal{C}$ and $t \geq 1$} \}
		\end{equation}
		%If $\Delta^{\mathcal{G}} w + s \leq 2d-1$, then $w(t, \cdot) \geq v_t(\cdot)$ for all $t \geq 1$.
	\end{lemma}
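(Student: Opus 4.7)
The plan is to reformulate the parabolic inequality as a scalar pointwise lower bound on $u$ and then run a simple induction in $t$, using Lemma \ref{pt_ind_lemma} as the key input. More specifically, I would start by rewriting the constraint $\Delta^{\mathcal{G}} u(t,x) + s(t,x) \leq 2d-1$ as
\begin{equation*}
2d\, u(t,x) \geq s_0(x) + \sum_{y \sim x} u(t-1,y) - (2d-1).
\end{equation*}
Since $u$ is integer-valued, dividing by $2d$ and applying the ceiling (and checking that $\lceil (A-(2d-1))/(2d)\rceil = \lfloor A/(2d)\rfloor$ for any integer $A$) shows that the constraint is equivalent to
\begin{equation*}
u(t,x) \geq \Bigl\lfloor \tfrac{s_0(x) + \sum_{y \sim x} u(t-1,y)}{2d}\Bigr\rfloor
\end{equation*}
for every $t \geq 1$ and $x \in \mathcal{C}$.

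Next I would verify that $v$ itself is admissible: this is immediate from Lemma \ref{pt_ind_lemma}, which says precisely that $v(t,x)$ equals this floor expression (with $v(t-1,\cdot)$ in place of $u(t-1,\cdot)$), so $v$ saturates the inequality. The remaining step is the comparison $v \leq u$ for any admissible $u$. I would induct on $t$. The base case $t=0$ is trivial because $v(0,\cdot) \equiv 0$ and $u$ takes values in $\mathbb{Z}^+$. For the inductive step, assuming $u(t-1,y) \geq v(t-1,y)$ for all $y \in \mathcal{C}$, monotonicity of the floor in each summand gives
\begin{equation*}
u(t,x) \geq \Bigl\lfloor \tfrac{s_0(x) + \sum_{y \sim x} u(t-1,y)}{2d}\Bigr\rfloor \geq \Bigl\lfloor \tfrac{s_0(x) + \sum_{y \sim x} v(t-1,y)}{2d}\Bigr\rfloor = v(t,x),
\end{equation*}
where the final equality is again Lemma \ref{pt_ind_lemma}.

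There is no substantive obstacle here: once one sees that the parabolic Laplacian inequality is exactly the one-step parallel toppling recursion in disguise, the argument runs on autopilot. The only slightly delicate point is the ceiling/floor identity that turns the real-valued inequality into the integer recursion, which relies crucially on $u$ being $\mathbb{Z}^+$-valued; skipping the integer constraint would give a weaker (continuous) least action principle and lose the exact match with $v$.
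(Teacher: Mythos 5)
Your proposal is correct and follows essentially the same route as the paper: both arguments observe that the constraint $\Delta^{\mathcal{G}} u + s \leq 2d-1$ is equivalent to the floor recursion of Lemma \ref{pt_ind_lemma}, check that $v$ itself is admissible, and establish the comparison with any admissible $u$ by induction on $t$ one time slice at a time. Your explicit verification of the ceiling/floor identity and the monotonicity step are just a slightly more spelled-out version of the paper's rearrangement, so there is nothing to flag.
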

	\begin{proof}
		Let $w(t,x)$ denote the right-hand side of \eqref{eq:parabolic_leastaction}. We show using Lemma \ref{pt_ind_lemma} and induction that $v(t,x) = w(t,x)$. Indeed, by the directed structure of $\mathcal{G}$, it suffices to show this equality one time slice at a time. Equality holds at $t=0$ as $v(0,x) = w(0,x) = 0$. Assume that $v(t',\cdot) = w(t', \cdot)$ for $t' \leq t$ and let $x \in \mathcal{C}$ be given. 
		The monotonicity of the graph Laplacian implies $\Delta^{\mathcal{G}} w + s \leq 2 d- 1$, hence, 
		\[
		\Delta^{\mathcal{G}} w(t+1,x) + s(t+1,x) = -2d w(t+1,x) + s_0(x) + \sum_{y\sim x} w(t,y) < 2d, 
		\]
		and a rearrangement yields, 
		\[
		w(t+1,x) \geq \lfloor \frac{s_0(x) + \sum_{y\sim x} w(t,y)}{2d} \rfloor. 
		\]
		By Lemma \ref{pt_ind_lemma} and the inductive hypothesis, the right-hand side of the above is exactly $v(t+1,x)$. Similarly, for the other direction, 
		\begin{align*}
		\Delta^{\mathcal{G}} v(t+1,x) + s(t+1,x) &=  2 d \left( \frac{s_0(x) + \sum_{y\sim x} w(t,y)}{2d} - \lfloor \frac{s_0(x) + \sum_{y\sim x} w(t,y)}{2d} \rfloor \right) \\
		&< 2d,
		\end{align*}
		which concludes the proof by minimality of $w$. 
	\end{proof}
	\noindent  
	Our usage of the parabolic least action principle in the main argument is minimal
	and can be avoided. And, in some sense, it is a restatement of Lemma \ref{pt_ind_lemma}. We included it as it may be of independent interest. 
	
	\subsection{Symmetry and fundamental domains} \label{subsec:symmetry}
	In this section we observe that sandpile dynamics on $\mathcal{C}_N^{(d)}$ preserve the symmetry structure of the $d$-dimensional hypercube.
	This is then used to reduce to the sandpile on a {\it fundamental domain} of the hypercube with reflecting boundary conditions.  The main contribution 
	of this subsection is a coordinate-wise description of this domain along with an explicit formula for the reflecting graph Laplacian. 
	
	We briefly provide a presentation of the group of automorphisms of the hypercube
	and its action on $\Z^d$; for more details see, for example, \cite{godsil2013algebraic}. 
	Let $\operatorname{Aut}_{\mathcal{C}_d}$ be the group of $(d \times d)$ matrices 
	with exactly one $\pm 1$ in each row and in each column and $0$s elsewhere. 
	Let $\sigma \in \operatorname{Aut}_{\mathcal{C}_d}$ act on $x \in \Z^d$ by matrix-vector multiplication
	followed by a translation and let it act on $f:\Z^d \to \mathbb{R}$ by $\sigma f(x) := f(\sigma x)$. 
	The translation is chosen to preserve $\mathcal{C}_{N}^{(d)}$ when $N$ is even or odd in our choice of coordinates. 

	% when it is even you can add 1/2 to all coordinates of Z^d and then 
	
	Each $\sigma$ is an isometry and hence preserves nearest neighbors and $\mathcal{C}^{(d)}_N$. 
	That is, if $y \not \in \mathcal{C}^{(d)}_{N}$, then $\sigma y \not \in \mathcal{C}^{(d)}_{N}$.
	And, if $|y-x| = 1$, then $|\sigma y - \sigma x| = 1$, so 
	\begin{equation} \label{eq:isometry}
	\sum_{y' \sim \sigma x} f(y') = \sum_{y \sim x} f(\sigma y).
	\end{equation}

	We say $\Omega \subseteq \mathcal{C}_{N}^{(d)}$ is a fundamental domain if there exists a set $\{\sigma_1, \sigma_2, \ldots \}  \subset  \operatorname{Aut}_{\mathcal{C}_d}$ so that $\bigcup_j \sigma_j \Omega = \mathcal{C}_{N}^{(d)}$. For example, a fundamental domain of an interval is half of it, 
	while a fundamental domain of a square is a right triangle with one side along a central cross section of the square. The fundamental domains which we consider have coordinate consistency across dimensions.  Let $M = \lceil N/2 \rceil$ and 
	\begin{equation}
	\mathcal{S}^{(d)}_M := \{ (x_1, \ldots, x_d): M \geq x_1 \geq \cdots \geq x_d \geq 1 \}.
	\end{equation}
	Observe that  $\{ \mathbf{x}_{d-1} : (\mathbf{x}_{d-1}, 1) \in \mathcal{S}^{(d)}_M\} = \mathcal{S}^{(d-1)}_M$;
	this is the first step towards proving dimensional reduction on the hypercube. 
	
	Let $v_t$ be the odometer function for an initial sandpile, $s_0$, on $\mathcal{C}_{N}^{(d)}$ which is {\it symmetric},  $ \sigma s_0 = s_0$ for all $ \sigma \in \operatorname{Aut}_{\mathcal{C}_d}$. We show that the parallel toppling odometer coincides 
	with the {\it symmetrized} odometer $v_t^{\mathcal{S}}$ on $\mathcal{S}_M^{(d)}$ with 
	appropriate reflecting boundary conditions. That is, for each $x \in \mathcal{S}_M^{(d)}$ and $y  \sim x$ there exists a unique rotation or reflection, $ \sigma_y \in \operatorname{Aut}_{\mathcal{C}_d}$, so that $ \sigma_y y \in \mathcal{S}_M^{(d)}$.
	Let $y \overset{\mathcal{S}}{\sim} x$ denote iteration over the set $\{  \sigma_y  y : y \sim x\}$.  For all $t \geq 0$ and $x \in \mathcal{S}_M^{(d)}$, let 
	\begin{equation}
	v_{t+1}^{\mathcal{S}}(x) = \lfloor \frac{ s_0(x) +  \sum_{y \overset{\mathcal{S}}{\sim}  x} v_t^{\mathcal{S}}(y)}{2 d} \rfloor, 
	\end{equation}
	where $v_0^{\mathcal{S}}: \equiv 0$. 
	
	We provide an algorithmic construction of this which we use to prove Lemma \ref{general_difference_comparison} below. Suppose $M \geq 2$, let $x \in \mathcal{S}_M^{(d)}$ be given and define 		
	$x_0 = M$ and $x_{d+1} = 1$.  The following algorithm produces a sequence of indices describing the symmetrized nearest neighbors of $x$. 
	Start with $l_0 = 0$ and pick the largest $(d+1) \geq r_0 \geq l_0$ with 
	$x_{l_0} = x_{r_0}$. If $r_0 = (d+1)$, stop, otherwise, set $l_1 = (r_0 + 1)$
	and repeat, generating
	\begin{equation}
	\mathbf{I}^{(M,d)}(x) := \{ (l_0, r_0), \ldots, (l_n, r_n) \}, 
	\end{equation}
	where $n \leq (d + 1)$. Observe that 
\[	
M = x_{l_0} = x_{r_0} <   \cdots < x_{l_k} = x_{r_k} < \cdots < x_{l_n} = x_{r_n} = 1
\]
	so that  
	\begin{equation}
	\begin{aligned}
	\sum_{y \overset{\mathcal{S}}{\sim}  x} v_t(y) &= \sum_{k=1}^{n-1} (1 + r_k-l_k)\left( v_t(x - e_{r_{k}}) + v_t(x + e_{l_{k}}) \right) \\
	&+ (r_0 - l_0)  v_t(x - e_{r_0}) \\
	&+ (r_n - l_n)  ( v_t(x - e_{r_n}) + v_t(x + e_{l_n}))
	\end{aligned}
	\end{equation}
	where $v_t(x - e_0) = v_t(x + e_{d+1}) := 0$ and 
	\begin{equation}
	v_t(x - e_{r_n}) =  \begin{cases}
	v_t(x) & \mbox{ if $N$ is even } \\
	v_t(x+e_{l_{n}}) & \mbox{ if $N$ is odd}.
	\end{cases}
	\end{equation}

	%The symmetric graph Laplacian is then defined as,
	%\begin{equation}
	%\Delta^{\mathcal{S}} v_t^{\mathcal{S}}(x) = -2d v_t^{\mathcal{S}}(x) + \sum_{y \overset{\mathcal{S}}{\sim}  x} v_t^{\mathcal{S}}(y).
	%\end{equation}

	\begin{lemma}[Symmetry]\label{symmetry_lemma}
		For each $t \geq 0$ and each $ \sigma \in \operatorname{Aut}_{\mathcal{C}_d}$, $ \sigma v_t = v_t$. Hence, $v_t = v_t^{\mathcal{S}}$ on $\mathcal{S}^{(d)}_M$.
	\end{lemma}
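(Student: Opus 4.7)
The plan is a direct induction on $t$ using the pointwise recursion from Lemma \ref{pt_ind_lemma}. At $t=0$ the odometer $v_0\equiv 0$ is trivially $\operatorname{Aut}_{\mathcal{C}_d}$-invariant, so it suffices to show that if $\sigma v_t = v_t$ for every $\sigma\in\operatorname{Aut}_{\mathcal{C}_d}$, the same holds for $v_{t+1}$. The three ingredients I would combine at this step are (i) Lemma \ref{pt_ind_lemma} as an explicit formula for $v_{t+1}$, (ii) the assumed symmetry of the initial condition, $s_0(\sigma x)=s_0(x)$, and (iii) the isometry identity \eqref{eq:isometry}, which says that $\sigma$ bijects the nearest neighbors of $x$ onto the nearest neighbors of $\sigma x$ while preserving the padding convention $v_t(y):=0$ outside $\mathcal{C}_N^{(d)}$.

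Concretely, applying Lemma \ref{pt_ind_lemma} at the point $\sigma x$, then rewriting the neighbor sum via \eqref{eq:isometry} and applying the inductive hypothesis $v_t(\sigma y)=v_t(y)$, one obtains
\[
v_{t+1}(\sigma x) \;=\; \left\lfloor \frac{ s_0(\sigma x) + \sum_{y\sim \sigma x} v_t(y)}{2d} \right\rfloor \;=\; \left\lfloor \frac{ s_0(x) + \sum_{y\sim x} v_t(y)}{2d} \right\rfloor \;=\; v_{t+1}(x),
\]
which closes the induction. There is no real obstacle here; one just needs to make sure the translation built into the action of $\operatorname{Aut}_{\mathcal{C}_d}$ is consistent with both embeddings of $\mathcal{C}_N^{(d)}$ into $\Z^d$ so that the padding convention is respected, but this is handled by the normalization already fixed in Section \ref{subsec:symmetry}.

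For the second assertion, I would use the symmetry just proved to collapse every neighbor sum onto $\mathcal{S}_M^{(d)}$. Given $x\in\mathcal{S}_M^{(d)}$ and $y\sim x$, there is a unique $\sigma_y\in\operatorname{Aut}_{\mathcal{C}_d}$ with $\sigma_y y\in\mathcal{S}_M^{(d)}$ (including the case $y\notin\mathcal{C}_N^{(d)}$, in which $\sigma_y y\notin\mathcal{C}_N^{(d)}$ by isometry and both sides vanish by padding), and $v_t(y)=v_t(\sigma_y y)$ by the first part. Substituting into Lemma \ref{pt_ind_lemma} yields
\[
v_{t+1}(x) \;=\; \left\lfloor \frac{ s_0(x) + \sum_{y \overset{\mathcal{S}}{\sim} x} v_t(y)}{2d} \right\rfloor,
\]
which is exactly the recursion defining $v_{t+1}^{\mathcal{S}}$. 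Since $v_0|_{\mathcal{S}_M^{(d)}} = v_0^{\mathcal{S}}\equiv 0$, a second induction on $t$ now gives $v_t = v_t^{\mathcal{S}}$ on $\mathcal{S}_M^{(d)}$, completing the lemma.
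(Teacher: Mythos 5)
Your proposal is correct and follows essentially the same route as the paper: induction on $t$ via Lemma \ref{pt_ind_lemma}, using the symmetry of $s_0$ and the isometry identity \eqref{eq:isometry} to close the inductive step. Your added detail on deducing $v_t = v_t^{\mathcal{S}}$ by collapsing neighbor sums onto the fundamental domain is a faithful elaboration of what the paper leaves implicit after the word ``Hence.''
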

	
	\begin{proof}
		We prove symmetry of $v_t$ by induction and Lemma \ref{pt_ind_lemma}. 
		At $t = 0$, $v_t \equiv 0$, so suppose symmetry holds at time $t$. Let $ \sigma \in \operatorname{Aut}_{\mathcal{C}_d}$, $x \in \mathcal{C}_{N}^{(d)}$ be given. By Lemma \ref{pt_ind_lemma},
		 \eqref{eq:isometry}, and the inductive hypothesis, 
		\begin{align*}
		v_{t+1}( \sigma x) &= \lfloor \frac{ s_0( \sigma x) + \sum_{y' \sim  \sigma x} v_t( y')}{2d} \rfloor \\
		&= \lfloor \frac{ s_0( \sigma x) + \sum_{y \sim x} v_t( \sigma y)}{2d} \rfloor \\
		&= \lfloor \frac{  s_0(x) + \sum_{y \sim x} v_t(y)}{2d} \rfloor \\
		&= v_{t+1}(x). 
		\end{align*}
	\end{proof}
	\noindent Note that the proof indicates that Lemma \ref{symmetry_lemma} can be extended in a natural way to 
	other graphs and domains which are preserved under the automorphism group of the graph.

	Henceforth, we consider $v_t^{\mathcal{S}}$ in $\mathcal{S}_M^{(d)}$ and drop all $\mathcal{S}$ superscripts. To reduce the number of cases with similar 
	arguments, we only consider $N = 2 M$. Indeed, when $N$ is odd, the proofs
	are identical except for slight changes to the boundary arguments. Also, we will use $\mathcal{S}_M^{(d)}$ to refer exclusively to the sorted fundamental domain of $\mathcal{C}_{2M}^{(d)}$. The expressions $v_t^{(d,M)}$  and $s_t^{(d,M)}$ will refer to the parallel toppling odometers and sandpiles on $\mathcal{S}_M^{(d)}$.

	\subsection{Derivative comparison}
	In this section we provide a general parabolic comparison result for first order differences of $v_t$ on $\mathcal{S}_M$
	when the initial sandpile, $s_0$, is constant. For $w \in \Z^d$, let $D_{w}$ denote a first order difference operator of the form $D_{w} v_t(\cdot) = v_t(\cdot) - v_t(\cdot+w)$. Pad $v_t$ by $v_t(x) := 0$ for all $x \not \in \mathcal{C}_N$.  
	Denote the interior with respect to $w$ as 
	\begin{equation}
	\mbox{Int}_w(\mathcal{S}_M) = \{ x \in \mathcal{S}_M : y' \in \mathcal{S}_M \mbox{ for all $|y'-(x+w)| = 1$} \}
	\end{equation}
	and the boundary as 
	\begin{equation}
	\partial_w \mathcal{S}_M = \mathcal{S}_M \backslash \mbox{Int}_w(\mathcal{S}_M).  
	\end{equation}

	Observe that every symmetrized $y \sim x \in \mathcal{S}_M$ is of the form $y = (x \pm e_i) \mp d_i$,  where $d_i$ is either a reflection, $d_i = e_i$ or a 
	rotation $d_i = e_i -e_j$. We will show that if one can control $D_{w} v_t$ over the reflecting, rotating, and dissipating boundaries of $\mathcal{S}_M$, then that control persists over time. The dissipating boundary on $\mathcal{S}_M$ is
	\begin{equation} \label{eq:dissipative_boundary}
	\mathcal{B}^{(disp)}_w \mathcal{S}_M = \{ x \in \mathcal{S}_M : (x + w)_{i} \geq M  \mbox{ for some $1 \leq i \leq d$} \}
	\end{equation}
	while the reflecting and rotating boundaries are  
	\begin{equation} \label{eq:reflecting_boundary}
	\mathcal{B}^{(ref)}_w \mathcal{S}_M = \{ x \in \mathcal{S}_M : (x+w)_{i} \leq 1 \mbox{ for some $1 \leq i \leq d$} \}
	\end{equation}
	and
	\begin{equation} \label{eq:rotating_boundary}
	\mathcal{B}^{(rot)}_w \mathcal{S}_M = \{ x \in \mathcal{S}_M : (x+w)_i \leq (x + w)_j \mbox{ for some $1 \leq i < j \leq d$} \}.
	\end{equation}
	For notational convenience write
	\begin{equation} \label{eq:all_boundaries}
	\mathcal{B}_w = \{\mathcal{B}^{(disp)}_{w}  \cup \mathcal{B}^{(ref)}_{w} \cup \mathcal{B}^{(rot)}_{w} \}
	\end{equation}
	and  $\mathcal{B}_{w_1, \ldots, w_n} = \cup_{i=1}^n \mathcal{B}_{w_i}$
	for points $w_i \in \Z^d$. 
		Note that in next lemma, we employ our convention to sometimes omit distinguishing sub/superscripts. 
	
	\begin{lemma} \label{general_difference_comparison}
		Let  $\mathbf{w} := \{w_1, \ldots, w_n\}$ be a set of points in $\Z^d$ each equipped with a function $g_j: \mathcal{S} \to \Z$ which is superharmonic in the interior of $\mathcal{S}$. If	
		\begin{equation}
		\label{eq:initial_cond} \sup_{j} \left( D_{w_j} v_{t_0}(x) - g_j(x) \right)  \leq 0 \mbox{\qquad for all $x \in \mathcal{S}$}
		\end{equation}
		and for all $t \geq t_0$ and $x \in \{\mathcal{B}_0 \cup \mathcal{B}_{\mathbf{w}}\} \mathcal{S}$,
		\begin{equation} \label{eq:boundaries_ref}
		\begin{aligned}
		\sup_{j} &( \sum_{y \sim x} v_{t}(y) - \sum_{y' \sim (x + w_j)} v_{t}(y') -  2 d g_j(x) ) \leq 0 \\
		&\mbox{  \qquad or } \\
		\sup_{j}  &( D_{w_j} v_{t+1}(x) - g_j(x) ) \leq 0
		 \end{aligned}
		\end{equation}
		then
		\begin{equation} \label{eq:derivative_comparison}
		\sup_{j} \left( D_{w_j} v_{t+1}(x) - g_j(x) \right) \leq 0
		\end{equation}
		for all $t \geq t_0$ and $x \in \mathcal{S}$. 
		
	\end{lemma}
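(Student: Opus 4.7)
The plan is a straightforward induction on $t \geq t_0$, using Lemma \ref{pt_ind_lemma} (in its symmetrized form) to pass from time $t$ to $t+1$. The base case at $t = t_0$ is exactly hypothesis \eqref{eq:initial_cond}. For the inductive step, fix $j$ and $x \in \mathcal{S}$, and aim to show $D_{w_j} v_{t+1}(x) \leq g_j(x)$; taking the supremum over $j$ at the end gives \eqref{eq:derivative_comparison}. The argument splits on whether $x \in \mathcal{B}_0 \cup \mathcal{B}_{\mathbf{w}}$ or not.

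On the boundary set $\mathcal{B}_0 \cup \mathcal{B}_{\mathbf{w}}$, the hypothesis \eqref{eq:boundaries_ref} does the work directly: if the second alternative holds, we are done, while the first alternative furnishes exactly the pre-floor bound fed into the argument below. For interior $x \notin \mathcal{B}_0 \cup \mathcal{B}_{\mathbf{w}}$, both $x$ and $x + w_j$ lie far enough from the walls of $\mathcal{S}$ that their natural nearest neighbors stay in $\mathcal{S}$ and no symmetrization occurs, so Lemma \ref{pt_ind_lemma} (combined with $s_0$ being constant so that $s_0(x) = s_0(x+w_j)$) gives
\[
v_{t+1}(x) - v_{t+1}(x+w_j) = \bigl\lfloor \tfrac{s_0 + A}{2d} \bigr\rfloor - \bigl\lfloor \tfrac{s_0 + B}{2d} \bigr\rfloor,
\]
where $A := \sum_{y \sim x} v_t(y)$ and $B := \sum_{y \sim x} v_t(y + w_j)$. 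By the inductive hypothesis at each $y \sim x$, $v_t(y) - v_t(y + w_j) \leq g_j(y)$, and summing and applying superharmonicity of $g_j$ in the interior yields $A - B \leq \sum_{y \sim x} g_j(y) \leq 2d\, g_j(x)$. Because $g_j(x) \in \Z$, the identity $\lfloor a + n \rfloor = \lfloor a \rfloor + n$ for integer $n$ converts this into $\lfloor (s_0+A)/2d \rfloor \leq \lfloor (s_0+B)/2d \rfloor + g_j(x)$, i.e.\ $D_{w_j} v_{t+1}(x) \leq g_j(x)$, as needed.

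The main obstacle is conceptual rather than computational: one must recognize that the symmetrized neighbor sums at $x$ and at $x + w_j$ collapse to natural neighbor sums precisely when $x$ avoids all three boundary types $\mathcal{B}^{(disp)}$, $\mathcal{B}^{(ref)}$, $\mathcal{B}^{(rot)}$ with respect to both $0$ and $w_j$ — this is why $\mathcal{B}_0 \cup \mathcal{B}_{\mathbf{w}}$ is defined as it is. Near those walls, the reflection/rotation structure from the algorithmic description of $\overset{\mathcal{S}}{\sim}$ can scramble the correspondence between the summands of $A$ and $B$, so the clean telescoping $A-B = \sum_{y \sim x}(v_t(y) - v_t(y+w_j))$ breaks; this is exactly where the two-pronged boundary hypothesis \eqref{eq:boundaries_ref} is essential, with the first alternative handling cases in which one can still produce a pre-floor bound and the second providing a safety valve when even the pre-floor bound is too much to ask. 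Once this partition is in place, the floor-shift lemma, the monotonicity of the Laplacian, and the integrality of $g_j$ do the rest mechanically.
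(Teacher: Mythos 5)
Your proposal is correct and follows essentially the same route as the paper: induction on $t$ with base case \eqref{eq:initial_cond}, the interior step via Lemma \ref{pt_ind_lemma} plus telescoping of the neighbor sums, superharmonicity, and integrality of $g_j$ to absorb the floor, and the boundary step handled directly by the two alternatives in \eqref{eq:boundaries_ref}. The only cosmetic difference is that you pass the integer $g_j(x)$ through the floor by monotonicity, whereas the paper carries a $(2d-1)$ slack in the numerator; these are equivalent.
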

	
	\begin{proof}
		We prove this by induction on $t$, starting at $t_0$, the base case guaranteed by \eqref{eq:initial_cond}.
		Suppose $\eqref{eq:derivative_comparison}$ holds at $t$ and let $w_j$, $x \in \mathcal{S}$ be given. 
		First suppose $x \in \{ \mbox{Int}_{w_j} \cap \mbox{Int}_{0} \}(\mathcal{S})$. By Lemma \ref{pt_ind_lemma}
		\begin{align*}
		D_{w_j} v_{t+1}(x) - g_j(x) &\leq \lfloor \frac{(2d -1) + \sum_{y \sim x} v_t(y) - \sum_{y' \sim (x + w_j)} v_t(y')}{2d} \rfloor -  g_j(x) \\
		&=  \lfloor \frac{(2d -1) + \sum_{y \sim x} D_{w_j} v_{t}(y) }{2d} \rfloor -  g_j(x)  \\
		&=  \lfloor \frac{(2d -1) + \sum_{y \sim x} \left( D_{w_j} v_{t}(y) - g_j(y) \right) + \sum_{y \sim x} (g_j(y)-g_j(x))  }{2d} \rfloor  \\
		&\leq  0 
		\end{align*} 
		as $g_j$ is superharmonic and integer-valued.  If $x \in \{ \mathcal{B}_0 \cup \mathcal{B}_{w} \}$, then we either use the same argument or conclude depending on the case in \eqref{eq:boundaries_ref}.

	\end{proof}
	
	\noindent As a corollary, we deduce the following discrete quasiconcavity property of $v_t$ on a hypercube, which was proved in \cite{babai2007sandpile}
	for axis monotonic initial sandpiles on $\Z^2$.  (Note that Aleksanyan and Shahgholian, using a discrete analogue of the method of moving planes, proved axis monotonicity
	of $v_{\infty}$ in \cite{aleksanyan2019discrete}.)
	\begin{corollary}[Axis monotonicity \cite{babai2007sandpile}] \label{axis_monotonicity}
		For all $t \geq 1$, $x \in \mathcal{S}$, and all sets of indices 
		\[
		\mathcal{I} = \{ i_1, \ldots, i_n : 1 \leq i_1  < \cdots < i_{n} \leq d \}
		\]
		and
		\[
		\mathcal{J} = \{ j_1, \ldots, j_m : i_n < j_1 < \cdots < j_{m} \leq d \} 
		\]
		where $n \geq 1$ and $m \geq 0$ we have 
		\[
		v_t(x) \geq v_t(x + e_{\mathcal{I}} - e_{\mathcal{J}}) \quad \mbox{for $(x + e_{\mathcal{I}} - e_{\mathcal{J}}) \in \mathcal{S}$}
		\]
		where $e_{\mathcal{I}} = \sum_{i \in \mathcal{I}} e_i$ denotes a sum over standard basis vectors indexed by $\mathcal{I}$.  
	\end{corollary}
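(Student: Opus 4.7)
The plan is to apply Lemma \ref{general_difference_comparison} with the family of translations $\{\,w_j := e_{\mathcal{J}} - e_{\mathcal{I}}\,\}$ indexed by all admissible pairs $(\mathcal{I},\mathcal{J})$ from the corollary's statement, together with the trivial choice $g_j \equiv 0$, which is integer-valued and (weakly) superharmonic. Under the substitution $y := x + e_{\mathcal{I}} - e_{\mathcal{J}}$, the conclusion $D_{w_j} v_{t+1}(y) \leq 0$ of the lemma rearranges exactly to the desired inequality $v_{t+1}(x) \geq v_{t+1}(y)$ at every $y \in \mathcal{S}$ with $y + w_j \in \mathcal{S}$. The initial condition at $t_0 = 0$ is immediate since $v_0 \equiv 0$.

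The substantive work is checking the boundary hypothesis of Lemma \ref{general_difference_comparison} on $\mathcal{B}_0 \cup \mathcal{B}_{w_j}$ for each admissible $w_j$, which I would organize by boundary type. On the rotating boundary, where $(y + w_j)_i = (y + w_j)_{i+1}$ for some $i$, the transposition in $\operatorname{Aut}_{\mathcal{C}_d}$ fixing $y + w_j$ identifies its neighbors outside $\mathcal{S}$ with reflections inside, and Lemma \ref{symmetry_lemma} rewrites $\sum_{y' \sim (y+w_j)} v_t(y')$ in a way that the sum condition reduces to the inductive hypothesis for a translation $w_{j'}$ obtained by swapping adjacent indices. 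The reflecting boundary $(y + w_j)_d = 1$ is handled analogously through the reflection $v_t(\cdot,0) = v_t(\cdot,1)$. A crucial structural point is that the index-ordering condition $i_n < j_1$ in the corollary is precisely what keeps these boundary-induced translations inside the admissible family, so that the induction can close.

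The main obstacle is the dissipating boundary, where $(y + w_j)_i = M$ for some $i \in \mathcal{J}$ (which forces $y_i = M - 1$). The neighbor $(y + w_j) + e_i$ lies outside the cube and is padded to zero, while its counterpart $y + e_i$ still sits in $\mathcal{C}_N$ and contributes positively, so the sum condition cannot hold directly. Here I would verify the alternative hypothesis of Lemma \ref{general_difference_comparison}, namely $D_{w_j} v_{t+1}(y) \leq 0$, by unwinding Lemma \ref{pt_ind_lemma} and invoking an already-proven, simpler instance of axis monotonicity (for example, the translation $e_{\mathcal{J} \setminus \{i\}} - e_{\mathcal{I}}$, which remains admissible when $|\mathcal{J}| \geq 2$) to bound the stray term $v_t(y + e_i)$; the floor in the parallel-toppling recursion provides the remaining slack. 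This nested use of the corollary at strictly simpler translations suggests arranging the whole argument as a secondary induction on $|\mathcal{I}| + |\mathcal{J}|$, and making this bootstrap go through cleanly in every sub-case is the most delicate step.
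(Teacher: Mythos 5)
Your overall strategy --- invoking Lemma \ref{general_difference_comparison} with the family $w = e_{\mathcal{J}} - e_{\mathcal{I}}$ and $g \equiv 0$ and closing a simultaneous induction over all admissible pairs --- is exactly what the paper intends (it states the result as a corollary of that lemma and supplies no further argument). But there is a genuine gap at the rotating boundary, and it is not a technicality. For a mixed pair ($\mathcal{J} \neq \emptyset$) the unrestricted conclusion of Lemma \ref{general_difference_comparison}, namely $D_{w}v_t(x) \leq 0$ for \emph{all} $x \in \mathcal{S}$, is false. Take $d=2$, $M=3$, $s_0 \equiv 4$, $w = e_2 - e_1$, $x=(2,2)$: then $x+w=(1,3)$, which symmetrizes to $(3,1)$, and a direct computation from Lemma \ref{pt_ind_lemma} gives $v_2(2,2)=2$ while $v_2(3,1)=1$, so $D_w v_2(2,2) = 1 > 0$. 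Consequently neither alternative in \eqref{eq:boundaries_ref} can be verified at such points, and your claim that the rotating boundary ``reduces to the inductive hypothesis for a translation obtained by swapping adjacent indices'' fails precisely here: when the tie straddles $\mathcal{I}$ and $\mathcal{J}$ (e.g.\ $x_{i_n}=x_{j_1}$), the sorting transposition converts the relevant part of $e_{\mathcal{J}}-e_{\mathcal{I}}$ into its negative, i.e.\ into a pair violating $i_n < j_1$, for which the inequality reverses. The corollary survives only because its conclusion is restricted to $x + e_{\mathcal{I}} - e_{\mathcal{J}} \in \mathcal{S}$, and your induction must carry that restriction along. The repair is to note that the interior step of Lemma \ref{general_difference_comparison} only consumes $D_w v_t(y) \leq 0$ at neighbors $y$ of points in $\mathrm{Int}_w$, and such $y$ automatically satisfy $y+w \in \mathcal{S}$; so the induction should run only over $\{y : y+w \in \mathcal{S}\}$ and never assert, nor need, the inequality at the offending diagonal points.

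Second, the case you single out as the main obstacle --- the dissipating boundary with $(y+w)_i = M$ for some $i \in \mathcal{J}$ --- is vacuous once the claim is restricted in this way. If $y+w \in \mathcal{S}$ and $(y+w)_i = M$ with $i \in \mathcal{J}$, sortedness forces $(y+w)_{i'} = M$ for every $i' < i$; but $\mathcal{I}$ is nonempty with all indices below $j_1 \leq i$, and any $i' \in \mathcal{I}$ has $(y+w)_{i'} = y_{i'} - 1 \leq M-1$, a contradiction. So the stray term $v_t(y+e_i)$ that you propose to control by a secondary induction on $|\mathcal{I}|+|\mathcal{J}|$ never arises at a point where the inequality is actually claimed; that bootstrap is aimed at a non-problem, while the real difficulty sits in the rotating (and, similarly, reflecting) cases that you pass over too quickly.
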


	We also have control on the derivative given an odometer upper bound
	on the dissipating boundary. 
		\begin{corollary}[Derivative bound] \label{cor:derivative_bound}
		Suppose $v_{\infty}(M, \mathbf{1}_{d-1}) \leq k M$ for integer $k \geq 1$. 
		Then, for all $1 \leq j \leq d$ and $t \geq 0$
		\begin{equation} \label{eq:cor:derivative_bound}
		v_t(x) - v_t(x+e_j) \leq k x_j.
		\end{equation}

	\end{corollary}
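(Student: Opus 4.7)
The plan is to invoke Lemma \ref{general_difference_comparison} with $n = d$, $w_j = e_j$, and $g_j(\mathbf{x}) = k x_j$ for $j = 1, \ldots, d$. Each $g_j$ is linear in the coordinates and therefore discretely harmonic, in particular superharmonic, on the interior of $\mathcal{S}_M^{(d)}$. The initial condition \eqref{eq:initial_cond} at $t_0 = 0$ is immediate since $v_0 \equiv 0$ and $g_j(\mathbf{x}) = k x_j \geq k \geq 1$ on $\mathcal{S}_M^{(d)}$.

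The hypothesis is used on the dissipating face in direction $j$. When $x_j = M$, the sorting $x_1 \geq \cdots \geq x_d$ forces $x_1 = \cdots = x_j = M$, and $\mathbf{x} + e_j \notin \mathcal{C}_N^{(d)}$ so $v_{t+1}(\mathbf{x} + e_j) = 0$. Iterated axis monotonicity (Corollary \ref{axis_monotonicity}), applied from $(M, \mathbf{1}_{d-1})$ by adding one basis vector $e_i$ at a time while remaining in $\mathcal{S}_M^{(d)}$, together with $v_{t+1} \leq v_{\infty}$, yields
\[
D_{e_j} v_{t+1}(\mathbf{x}) \;=\; v_{t+1}(\mathbf{x}) \;\leq\; v_{\infty}(M, \mathbf{1}_{d-1}) \;\leq\; k M \;=\; k x_j,
\]
verifying the second alternative of \eqref{eq:boundaries_ref} on this face.

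On the remaining components of $\mathcal{B}_0 \cup \mathcal{B}_{e_j}$ --- the reflecting face $x_d = 1$ and the rotating faces $x_i = x_{i+1}$, together with their $e_j$-shifts --- the plan is to verify the first alternative of \eqref{eq:boundaries_ref} using the explicit symmetrized-neighbor formula preceding Lemma \ref{symmetry_lemma}. Every symmetrized neighbor $\mathbf{y}$ of $\mathbf{x}$ pairs with a symmetrized neighbor $\mathbf{y}'$ of $\mathbf{x} + e_j$ in one of two ways: either as a directional pair $(\mathbf{y}, \mathbf{y} + e_j)$, contributing $D_{e_j} v_t(\mathbf{y}) \leq k y_j$ by the inductive hypothesis; or as a coincident pair, formed when a reflected or rotated neighbor on one side matches an unreflected neighbor on the other (e.g.\ at $x_d = 1$ the reflected $\mathbf{x} - e_d \mapsto \mathbf{x}$ pairs with the actual neighbor $(\mathbf{x} + e_d) - e_d = \mathbf{x}$), contributing $0$. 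Summing with the multiplicities dictated by the index partition $\mathbf{I}^{(M, d)}$ and invoking the interior identity $\sum_{\mathbf{y} \overset{\mathcal{S}}{\sim} \mathbf{x}} y_j = 2 d x_j$ yields the required bound $\leq 2 d k x_j$; Lemma \ref{general_difference_comparison} then delivers \eqref{eq:cor:derivative_bound} for all $t \geq 0$ and all $1 \leq j \leq d$.

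The main obstacle is the combinatorial bookkeeping in the last step: for each face that $\mathbf{x}$ sits on and each direction $j$, one must track how the partition $\mathbf{I}^{(M, d)}(\mathbf{x})$ changes under the shift by $e_j$ and confirm that the coincident-pair cancellations exactly compensate for the positive boundary correction that the naive sum $\sum_{\mathbf{y} \overset{\mathcal{S}}{\sim} \mathbf{x}} y_j$ picks up at reflecting or rotating faces. The linearity of $g_j$ is what makes this match tight; for any superlinear $g_j$ the argument would lose a boundary term.
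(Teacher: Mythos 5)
Your proposal takes exactly the route the paper does: apply Lemma \ref{general_difference_comparison} with $w_j = e_j$ and the harmonic barrier $g_j(\mathbf{x}) = k x_j$, settle the dissipating face by axis monotonicity together with the hypothesis $v_\infty(M,\mathbf{1}_{d-1}) \leq kM$ (this is the paper's Case $J=0$, and your version of it is correct), and verify the first alternative of \eqref{eq:boundaries_ref} on the reflecting and rotating faces by comparing the two symmetrized neighbor sums. The one substantive caveat is that the step you defer as ``combinatorial bookkeeping'' is not a routine cancellation but is the entire technical content of the paper's proof (four of its five cases). Your pairing picture is slightly too optimistic: the partitions $\mathbf{I}^{(M,d)}(\mathbf{x})$ and $\mathbf{I}^{(M,d)}(\mathbf{x}+e_{l_J})$ assign genuinely different multiplicities to the matched neighbors --- e.g.\ a block of weight $1+r_J-l_J$ on one side faces a block of weight $r_J-l_J$ on the other --- so the terms do not cancel pairwise into clean $D_{e_j}$ differences plus zeros; there are leftover unmatched summands that the paper absorbs by invoking the already-established first-difference bounds $k x_i$ in \emph{other} directions $i \neq j$ (together with identities like $x_{l_{n-1}} = x_{l_n}+1 = 2x_{l_n}$ near the reflecting corner), not just the identity $\sum_{\mathbf{y}\sim\mathbf{x}} y_j = 2dx_j$. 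You also implicitly need that after symmetrization the difference direction becomes $e_{l_J}$ rather than $e_j$, which is harmless only because $x_{l_J} = x_j$ and the lemma is run with the supremum over all $d$ directions simultaneously; this should be said. So the strategy is right and nothing in your outline would fail, but the proof is incomplete until those boundary cases are actually carried out.
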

	
	\begin{proof}
	The claim is immediate if $M = 1$, so suppose $M \geq 2$. 
 Let $e_j$ and $x$ be given and let
	\begin{equation}
	\mathbf{I}^{(M,d)}(x) := \{ (l_0, r_0), \ldots,(l_n, r_n)\}, 
	\end{equation}
	be the indices describing the nearest neighbors of $x$ as defined in Section \ref{subsec:symmetry} above. 

	Pick the largest index $J$ so that 
	\[
	l_{J} \leq j \leq r_{J},
	\]	
	$v_t(x + e_j) = v_t(x+e_{l_J})$, and  (recalling $l_J = r_{J-1} + 1$)
	\begin{equation} \label{eq:neighbor_sym}
	\mathbf{I}^{(M,d)}(x + e_{l_J} ) = 
	\begin{cases}
	\{ \ldots,  (l_{J-1}, r_{J-1} + 1), ( l_J + 1, r_J), \ldots \} &\mbox{ if $x_{r_{J-1}} = x_{l_J} + 1$}  \\
	\{ \ldots,  (l_{J-1}, r_{J-1}), ( l_J, l_J),  (l_J+1, r_J), \ldots \} &\mbox{ if $x_{r_{J-1}} > x_{l_J} + 1$} .
	\end{cases}
	\end{equation}

	As $g_j(x) := k x_j$ is harmonic in the interior of $\mathcal{S}$, it remains to check \eqref{eq:boundaries_ref} in Lemma \ref{general_difference_comparison}. 
	For later reference, we label the expression we bound,
	\begin{equation} \label{eq:neighbor_sum}
	\sum_{y \sim x} v_t(y) - \sum_{y' \sim (x+e_{l_J})} v_t(y').  
	\end{equation}
	 The computations are similar in other cases, so we assume $x_{r_{J-1}} = x_{l_J} + 1$ and $l_J + 1 \leq r_J$.

	{\bf Case 1: $J = 0$} \\
	As we are on the dissipating boundary, $v_{t+1}(x+e_{l_J}) = 0$ and $x_{l_J} = \cdots = x_j = M$, hence 
	\[
	v_{t+1}(x) - v_{t+1}(x + e_{l_J})  = v_{t+1}(x) \leq v_{\infty}(M, \mathbf{1}_{d-1}) \leq k M
	\]
	 by axis monotonicity and our assumption on the odometer.

	{\bf Case 2: $1 < J < n$}\\
	We compute \eqref{eq:neighbor_sum}, observing that all differences except for those near $r_J$ are unaffected
	by the symmetrization; 
	\begin{align*}
	\eqref{eq:neighbor_sum} &= \sum_{k=1, k \not \in [J-1,J]}^{n-1} (1 + r_k - l_k) ( v_t(x - e_{r_k}) - v_t(x-e_{r_k} + e_{l_J})) \\
	&+\sum_{k=1, k \not \in [J-1,J]}^{n-1} (1 + r_k - l_k) (v_t(x + e_{l_k}) - v_t(x + e_{l_k} + e_{l_J})) \\
	&+  (r_0 - l_0)  (v_t(x - e_{r_0}) - v_t(x - e_{r_0} + e_{l_J})) \\
	&+ (r_n - l_n)  ( v_t(x)  - v_t(x + e_{l_J})  + v_t(x+e_{l_n}) - v_t(x + e_{l_n} + e_{l_J}))   \\
	&+ \star_{\{J-1,J\}} \\
	&\leq (2 d - 2 (1 + r_J - l_{J-1})) k x_{j}  + \star_{\{J-1,J\}} 
	\end{align*}
	where $\star_{\{J-1,J\}}$ is defined as the sum of terms in the difference with indices $\{J-1,J\}$. 
	This can then be computed,
	\begin{align*}
	\star_{\{J-1,J\}}  &= (1 + r_J - l_J) ( v_t(x - e_{r_J}) + v_t(x + e_{l_J})) \\
	&- (r_J - l_J) ( v_t(x + e_{l_J} - e_{r_J}) + v_t(x +e_{l_J} + e_{l_{J} + 1})) \\
	&+  (1 + r_{J-1} - l_{J-1})( v_t(x - e_{r_{J-1}}) + v_t(x + e_{l_{J-1}})) \\
	&- (2 + r_{J-1} -  l_{J-1}) ( v_t(x + e_{l_J} - e_{l_J}) + v_t(x + e_{l_J} + e_{l_{J-1}})) \\
	&\leq (r_J - l_J) 2 k x_j +  (1 + r_{J-1} - l_{J-1})( k (x_j+1-1) + k x_j) \\
	&+ k (x_{j}-1)  + k(x_j + 1)  \\
	&=  2 (1 + r_J - l_{J-1})  k x_j.
	\end{align*}

	{\bf Case 3: $J = 1 < n$} \\
	We bound differences with indices $\{0,1\}$ in \eqref{eq:neighbor_sum},
	\begin{align*} 
	\star_{\{0,1\}} 
%	&=  (r_0 - l_0) (v_t(x - e_{r_0})) \\
%	&- (r_0 + 1 - l_0) (v_t(x)) \\
%	&+ (1 + r_1 - l_1) (v_t(x - e_{r_1}) + v_t(x + e_{l_1})) \\
%	&- (r_1 - l_1)( v_t(x  + e_{l_1} - e_{r_1}) + v_t(x + e_{l_1} + e_{l_1 + 1})) \\
	&\leq  (r_0 -l_0) k x_{l_1} \\
	&+ (r_1 - l_1) 2 k x_{l_1} \\
	&+ ( v_t(x - e_{r_1}) - v_t(x)) +   (v_t(x + e_{l_1})  - 0) \\
	&\leq (r_0 -l_0) k x_{l_1}  +  (r_1 - l_1) 2 k x_{l_1} +k  ( x_{l_1} - 1) + k (x_{l_1} + 1) \\ 
	&\leq    2 (r_1 - l_0)  k x_{l_1}.
	\end{align*}

	{\bf Case 4: $J = n > 1$}  \\
	We bound differences with indices $\{n-1,n\}$ in \eqref{eq:neighbor_sum},
	\begin{align*}
	\star_{\{n-1,n\}} 
%	&= (1 + r_{n-1} - l_{n-1}) (v_t(x - e_{r_{n-1}}) + v_t(x + e_{l_{n-1}})) \\
%	&- (2 + r_{n-1} - l_{n-1})( v_t(x + e_{l_n} - e_{l_n}) + v_t(x+e_{l_n} + e_{l_{n-1}})) \\
%	&+ (r_n - l_n) ( v_t(x) + v_t(x + e_{l_n})) \\
%	&- (r_n - l_n - 1)( v_t(x+e_{l_n}) + v_t(x + e_{l_n} + e_{l_n + 1})) \\
	&\leq (1 + r_{n-1} - l_{n-1})( k (x_{r_{n-1}}-  1) + k x_{l_n}) \\
	&+ (r_n  - l_n-1) ( k x_{l_n} + k x_{l_{n} + 1}) \\
	&+ (v_t(x) - v_t(x)) + (v_t(x + e_{l_n}) - v_t(x + e_{l_n}  + e_{l_{n-1}})) \\
	&\leq (1 + r_{n-1} - l_{n-1})2 k x_{l_n} + (r_n  - l_n-1) 2 k x_{l_n} \\
	&+ k (x_{l_{n-1}}) \\
	&\leq  2 (r_n - l_{n-1}) k x_{l_n}.
	\end{align*}
	In the last step we used $x_{l_{n-1}} = x_{l_n} + 1 = 2 x_{l_n}$.

	{\bf Case 5: $J = n = 1$} \\
	We bound differences with indices $\{0,n\}$ in \eqref{eq:neighbor_sum},
	\begin{align*}
	\star_{\{0,n\}} 
%	 &=   (r_0 - l_0) v_t(x- e_{r_0}) \\
%	&- (r_0 + 1 - l_0) v_t(x)  \\
%	&+ (r_n - l_n)  (v_t(x) + v_t(x + e_{l_n})) \\
%	&- (r_n - l_n - 1)( v_t(x + e_{l_n}) + v_t(x + e_{l_n} + e_{l_n+1}))  \\
	&\leq (r_0 - l_0) (k (x_{r_0} - 1))  \\
	&+ (r_n - l_n - 1) (k x_{l_n} + k x_{l_n+1}) \\
	&+ (v_t(x + e_{l_n}) - 0) + (v_t(x) - v_t(x)) \\
	&\leq (r_0 - l_0) 2 k x_{l_n} + (r_n - l_n - 1) 2 k x_{l_n}   \\
	&+ k x_{l_n} \\ 
	&\leq 2 (r_n - l_0 - 1) k x_{l_n}.
	\end{align*}

	\end{proof}

	\subsection{Weak topple control}	
	We provide a difference-in-time analogue of Lemma \ref{general_difference_comparison}

	\begin{lemma} \label{topple_limit}
	For all $t \geq t_0$ and $j \geq 0$, 
	\[
	\max_{z \in \mathcal{S}} \left( v_{t+j}(z) - v_t(z) \right) \leq \max_{z \in \mathcal{S}} \left( v_{t_0+j}(z) - v_{t_0}(z) \right)
	\]
	%Moreover, $v_{t+j}(x) - v_t(x) = j$ if and only if 
	%$v_{t+j-1}(y) = v_{t-1}(y) + j$ for some neighbor $y \sim x$. 
	\end{lemma}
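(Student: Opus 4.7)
The plan is to show the stronger statement that $M_t := \max_{z \in \mathcal{S}} \bigl(v_{t+j}(z) - v_t(z)\bigr)$ is monotone non-increasing in $t$, from which the lemma follows by iterating $M_{t_0} \geq M_{t_0+1} \geq \cdots \geq M_t$. The proof proceeds by induction on $t$, with the base case trivial and the inductive step reducing to a single algebraic inequality about floors of averages.

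For the inductive step, apply Lemma \ref{pt_ind_lemma} (in its symmetrized form on $\mathcal{S}$, which is valid by Lemma \ref{symmetry_lemma} and the construction in Section \ref{subsec:symmetry}) to write, for a fixed $z \in \mathcal{S}$,
\[
v_{t+1+j}(z) = \left\lfloor \frac{s_0(z) + \sum_{y \sim z} v_{t+j}(y)}{2d} \right\rfloor, \qquad v_{t+1}(z) = \left\lfloor \frac{s_0(z) + \sum_{y \sim z} v_{t}(y)}{2d} \right\rfloor.
\]
Setting $A$ and $B$ equal to the (un-floored) arguments, I would use $\lfloor A\rfloor \leq A$ and $\lfloor B\rfloor \geq B - \tfrac{2d-1}{2d}$ (the denominator of $B$ divides $2d$) to get
\[
v_{t+1+j}(z) - v_{t+1}(z) \;\leq\; \frac{\sum_{y \sim z} \bigl(v_{t+j}(y) - v_t(y)\bigr)}{2d} + \frac{2d-1}{2d} \;\leq\; M_t + \frac{2d-1}{2d},
\]
where the second inequality uses that each of the $2d$ symmetrized neighbor terms is at most $M_t$ (with the convention $v_\cdot(y) = 0$ on the dissipating boundary, which trivially satisfies $0 - 0 \leq M_t$). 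Since the left-hand side is an integer, it is at most $\lfloor M_t + \tfrac{2d-1}{2d}\rfloor = M_t$. Taking the max over $z$ gives $M_{t+1} \leq M_t$.

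I do not anticipate a genuine obstacle: the argument is a one-line consequence of Lemma \ref{pt_ind_lemma} combined with the elementary bound $\lfloor A\rfloor - \lfloor B\rfloor \leq \lfloor A - B + \tfrac{2d-1}{2d}\rfloor$ for $A,B \in \tfrac{1}{2d}\mathbb{Z}$. The only mild care needed is to confirm that the symmetrized neighbor sum really has $2d$ terms (counted with the multiplicities coming from $\mathbf{I}^{(M,d)}(x)$) so that the averaging factor is exactly $2d$; this is immediate from the construction in Section \ref{subsec:symmetry}. No initial-condition or domain-boundary assumptions are required beyond what is already built into Lemma \ref{pt_ind_lemma}, so the bound holds for all $t \geq 0$, and in particular for all $t \geq t_0$ with any prescribed $t_0$.
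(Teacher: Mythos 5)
Your argument is correct and is essentially the paper's own proof: both apply Lemma \ref{pt_ind_lemma} at the two times, use the elementary floor bound $\lfloor A\rfloor-\lfloor B\rfloor\le\lfloor A-B+\tfrac{2d-1}{2d}\rfloor$ on the symmetrized neighbor sums, and conclude by integrality. The only (immaterial) difference is that you phrase the induction as monotonicity of $M_t$ while the paper compares directly against the $t_0$ maximum.
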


	\begin{proof}
		We induct on $t$ starting at $t_0$. Suppose the result holds for $(t-1)$
		and let $x \in \mathcal{S}$ be given.
		Lemma \ref{pt_ind_lemma} implies  
		\[
		v_{t + j}(x) - v_{t}(x) \leq \lfloor \frac{ (2d-1) + \sum_{y \sim x} \left(v_{t+j-1}(y) - v_{t-1}(y) \right) }{2 d} \rfloor, 
		\]		
		hence, by induction 
		\[
			v_{t + j}(x) - v_{t}(x) \leq \lfloor \frac{ (2d-1) + 2 d\left( \max_{z \in \mathcal{S}} \left( v_{t_0+j}(z) - v_{t_0}(z) \right) \right) }{2 d} \rfloor =  \max_{z \in \mathcal{S}} \left( v_{t_0+j}(z) - v_{t_0}(z) \right). 
		\]

	\end{proof}

	\section{Explicit solutions when \texorpdfstring{$M \leq 2$}{M <= 2}} \label{sec:base_case}
	
	In this section, we compute $v_t^{(d)}$ when $s_0^{(d)} \equiv 2 d + (d-1)$ for all $d \geq 1$ when $M=2$.
	We also show that dimensional reduction does not occur at dimension
	$d=d_0$ when $s^{(d)}_0 \equiv 2 d + (d_0-1)$ and $M=1$.

	\subsection{\texorpdfstring{$M=1$}{M=1}}
	As we do not know how to define a $0$-dimensional sandpile, suppose $d_0 \geq 2$. 
	\begin{prop}
	 When $s_0^{(d)} \equiv 2 d + (d_0-1)$,  $v_\infty^{(d_0)} \equiv 1$ but $v_\infty^{(d_0-1)} \equiv 2$.
	\end{prop}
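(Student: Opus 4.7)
\medskip

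\noindent\textbf{Proof proposal.} The plan is to exploit the fact that when $M=1$, the fundamental domain $\mathcal{S}_M^{(d)} = \{(1,1,\ldots,1)\}$ is a single point, so Lemma \ref{symmetry_lemma} forces $v_t^{(d)}$ to be \emph{constant} on all of $\mathcal{C}_2^{(d)}$ for every $t \geq 0$. Once this is in hand, the odometer is determined by a one-variable computation, and the statement reduces to two ceiling-function inequalities.

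\medskip

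\noindent First, I would compute the graph Laplacian of a constant function $v \equiv c$ on $\mathcal{C}_2^{(d)}$. Each site $x \in \{1,2\}^d$ has exactly $d$ neighbors inside $\mathcal{C}_2^{(d)}$ (flipping any single coordinate between $1$ and $2$ keeps one inside, the other takes one outside), so with the convention $v \equiv 0$ off the cube,
\begin{equation*}
\Delta v(x) \;=\; -2d\,c \;+\; d\cdot c \;+\; d\cdot 0 \;=\; -d\,c.
\end{equation*}
Next, I invoke the (standard) least action principle: $v_\infty$ is the pointwise-minimal nonnegative integer-valued function with $s_0 + \Delta v_\infty \leq 2d - 1$. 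Combined with symmetry, this says $v_\infty^{(d)} \equiv c^\star$ where $c^\star$ is the smallest nonnegative integer satisfying
\begin{equation*}
\bigl(2d + (d_0-1)\bigr) - d\,c^\star \;\leq\; 2d - 1, \qquad \text{i.e.,} \qquad c^\star \;\geq\; \frac{d_0}{d}.
\end{equation*}

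\medskip

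\noindent Finally, I plug in the two dimensions of interest. For $d = d_0$ the bound becomes $c^\star \geq 1$, and one checks the sandpile $s_0 + \Delta(1)$ equals $2d_0 + (d_0 - 1) - d_0 = d_0 - 1 \leq 2d_0 - 1$, so $v_\infty^{(d_0)} \equiv 1$. For $d = d_0 - 1$ the bound becomes $c^\star \geq d_0/(d_0-1) > 1$, forcing $c^\star \geq 2$; one then verifies $s_0 + \Delta(2) = 2(d_0-1) + (d_0-1) - 2(d_0-1) = d_0 - 1 \leq 2(d_0 - 1) - 1$, which holds since $d_0 \geq 2$. Hence $v_\infty^{(d_0-1)} \equiv 2$.

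\medskip

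\noindent There is no real obstacle here; the only subtlety is remembering that on $\mathcal{C}_2^{(d)}$ every site is a boundary site, so half of the would-be neighbors are dissipative zeros, which is precisely what produces the factor $-d\,c$ (rather than $0$) in the Laplacian and drives the two sides of the inequality apart. The proposition is then an immediate corollary, showing that dimensional reduction $v_\infty^{(d_0)}(\mathbf{x},1) = v_\infty^{(d_0-1)}(\mathbf{x})$ fails at the critical dimension when $k = d_0 - 1$.
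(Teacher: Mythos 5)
Your proposal is correct and rests on the same key computation as the paper's proof: on $\mathcal{C}_2^{(d)}$ every site is a corner with exactly $d$ internal neighbors, so the Laplacian of a constant $c$ is $-dc$. Where you differ is in the scaffolding. The paper simply runs the parallel dynamics: since $s_0 \geq 2d$ everything topples at $t=1$, and then one reads off $s_1^{(d_0)}(\mathbf{1}) = 2d_0-1$ (stable) versus $s_1^{(d_0-1)}(\mathbf{1}) = 2(d_0-1)$ (still unstable, hence a second topple). You instead characterize $v_\infty$ variationally via the least action principle combined with the symmetry lemma, reducing everything to finding the smallest integer $c^\star \geq d_0/d$. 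Both are valid; the paper's route is more elementary (no appeal to the least action principle, just one or two explicit steps of the dynamics), while yours is arguably cleaner in that it determines $v_\infty$ in one shot for all $d$ and makes transparent why the threshold sits exactly at $d = d_0$, namely $d_0/d \leq 1$ iff $d \geq d_0$. One arithmetic slip to fix: in the $d = d_0$ case you wrote $s_0 + \Delta(1) = 2d_0 + (d_0-1) - d_0 = d_0 - 1$, but the correct value is $2d_0 - 1$; the inequality $\leq 2d_0 - 1$ still holds, but with equality --- and that tightness (the configuration lands exactly one chip below the toppling threshold) is precisely the delicate point that distinguishes dimension $d_0$ from dimension $d_0 - 1$, so it is worth stating correctly.
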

	\begin{proof} 
		In dimension $d$, a corner site of the hypercube has $d$ internal neighbors so 
		$\Delta^{(d)} (\mathbf{1}) = - 2 d + d = -d $.
		Hence, in dimension $d_0$, 
		\[
		s_1^{(d_0)}(\mathbf{1}) = (2 d_0 + (d_0-1))- d_0 = 2 d_0 - 1
		\]
		however, in dimension $(d_0-1)$, 
		\[
		s_1^{(d_0-1)}(\mathbf{1}) = (2 (d_0-1) + (d_0-1)) - (d_0-1) = 2(d_0-1).
		\]
	\end{proof}

	\subsection{\texorpdfstring{$M=2$}{M=2}}
	Now, suppose $d \geq 1$. After a radial reparameterization of $\mathcal{S}_2$, arbitrary dimensional sandpiles become one-dimensional with a simple nearest-neighbor 
	toppling rule.  Indeed, every $\mathbf{x} \in \mathcal{S}_2$ is of the form $\mathbf{x} =(\mathbf{2}_{x},\mathbf{1}_{d-x})$, for $x = 0, \ldots, d$. Overload notation and consider $s_t$ and $v_t$ as functions on $\{0, \ldots, d\}$. The Laplacian on the one-dimensional graph can then be computed using symmetry. 
	%Fine analysis of this one-dimensional sandpile is tractable 
	%as its Laplacian is nearest neighbor. % (We could not extend this property  $M \geq 3$.)
	\begin{lemma}
		If we define $0 = v_t(d+1) = v_t(-1)$, then 
		\[
		%\Delta v_t(x) = (-2 d + (d-x)) v_t(x) + (d-x) v_t(x+1) + x v_t(x-1)
		\Delta v_t(x) = (-d - x) v_t(x) + (d-x) v_t(x+1) + x v_t(x-1).
		\]
	\end{lemma}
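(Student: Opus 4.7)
The plan is to compute the Laplacian by direct application of the symmetrized neighbor-sum formula from Section \ref{subsec:symmetry} at the specific points $\mathbf{x} = (\mathbf{2}_x, \mathbf{1}_{d-x})$ and translate the answer into the one-dimensional parametrization. No new ideas are required; the work is in bookkeeping the indices $(l_k, r_k)$ and checking the endpoint cases.

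\textbf{Step 1: identify the index set.} Fix $0 \leq x \leq d$ and $\mathbf{x} = (\mathbf{2}_x, \mathbf{1}_{d-x})$. Recalling that $x_0 := M = 2$ and $x_{d+1} := 1$, the first $x$ coordinates all equal $2$ (and agree with $x_0$) while the last $d-x$ coordinates all equal $1$ (and agree with $x_{d+1}$). Running the algorithm of Section \ref{subsec:symmetry} therefore yields $n=1$ with
\[
\mathbf{I}^{(2,d)}(\mathbf{x}) = \{ (0, x),\, (x+1, d+1) \}.
\]
The degenerate cases $x=0$ and $x=d$ are handled by the conventions $v_t(\mathbf{x}-e_0) := 0$ and $v_t(\mathbf{x}+e_{d+1}) := 0$, respectively.

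\textbf{Step 2: apply the neighbor-sum formula.} Since $n=1$, the interior sum $\sum_{k=1}^{n-1}$ is empty and only the two boundary terms survive:
\[
\sum_{y \overset{\mathcal{S}}{\sim} \mathbf{x}} v_t(y) = (r_0-l_0)\, v_t(\mathbf{x}-e_{r_0}) + (r_1-l_1)\bigl( v_t(\mathbf{x}-e_{r_1}) + v_t(\mathbf{x}+e_{l_1}) \bigr).
\]
With $(l_0,r_0,l_1,r_1)=(0,x,x+1,d+1)$, the multiplicities become $x$ and $d-x$.

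\textbf{Step 3: translate to the one-dimensional coordinate.} Removing a chip from coordinate $x$ turns one $2$ into a $1$, so $\mathbf{x}-e_x$ is identified with $x-1$; similarly $\mathbf{x}+e_{x+1}$ is identified with $x+1$. Because $N=2M=4$ is even, the convention $v_t(\mathbf{x}-e_{r_n}) = v_t(\mathbf{x})$ from Section \ref{subsec:symmetry} gives $v_t(\mathbf{x}-e_{d+1}) = v_t(x)$. Substituting yields
\[
\sum_{y \overset{\mathcal{S}}{\sim} \mathbf{x}} v_t(y) = x\, v_t(x-1) + (d-x)\bigl( v_t(x) + v_t(x+1) \bigr).
\]

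\textbf{Step 4: assemble the Laplacian.} Adding the diagonal $-2d\, v_t(\mathbf{x})$ and collecting terms gives
\[
\Delta v_t(x) = -2d\, v_t(x) + x\, v_t(x-1) + (d-x) v_t(x) + (d-x) v_t(x+1) = (-d-x)\, v_t(x) + (d-x)\, v_t(x+1) + x\, v_t(x-1),
\]
which is the claimed identity. The one verification worth doing explicitly is the two endpoints: at $x=0$ the term $x\, v_t(x-1)$ vanishes in agreement with the $(r_0-l_0)=0$ multiplicity, and at $x=d$ the factor $(d-x)=0$ annihilates both the $v_t(x+1)$ and the artificial reflection term, matching the dissipating boundary along $x_1 = M$.

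There is no real obstacle here; the only mildly delicate point is keeping track of the even-$N$ reflecting convention $v_t(\mathbf{x}-e_{r_n})=v_t(\mathbf{x})$, which is precisely what produces the $(d-x)\, v_t(x)$ contribution to the coefficient of $v_t(x)$ and hence the asymmetry between the coefficients $-d-x$ and $d-x$ in the stated formula.
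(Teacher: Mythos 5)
Your proposal is correct and follows essentially the same route as the paper: identify $\mathbf{I}^{(2,d)}(\mathbf{x}) = \{(0,x),(x+1,d+1)\}$, plug into the symmetrized neighbor-sum formula with the even-$N$ reflection convention, and translate to the one-dimensional coordinate. The paper's own proof is just a terser version of your Steps 1--4, so there is nothing to add beyond noting that your closing remark about the $x=d$ endpoint slightly conflates the vanishing of the last (reflecting) block with the dissipation of the $+e$ neighbors of the coordinates equal to $M$, which is already encoded by the convention $v_t(\mathbf{x}+e_{l_0}):=0$; this does not affect the computation.
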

	\begin{proof}
			Let $\mathbf{x} = (\mathbf{2}_{x},\mathbf{1}_{d-x})$ so that $\mathbf{I}^{(2,d)}(\mathbf{x}) = \{ (0,x), (x+1,d+1) \}$.
			Hence, by definition of the symmetric Laplacian, 
\begin{align*}
				\Delta v_t( \mathbf{x}) &= -2 d v_t(\mathbf{x}) + x v_t(\mathbf{x} - e_x) + (d-x) (v_t(\mathbf{x}) + v_t(\mathbf{x} + e_{x+1}))  \\
				&= -2d v_t(x) + (d-x) v_t(x) + (d-x) v_t(x+1) + x v_t(x-1).
\end{align*}

		\iffalse
		Let $\mathbf{x} = (\mathbf{1}_{d-x}, \mathbf{2}_{x})$. By definition 
		of the graph Laplacian, 
		\[
		\Delta v_t(x) = -2 d v_t(x) + \sum_{i=1}^d \left( v_t(\mathbf{x} + e_i) + v_t(\mathbf{x}-e_i) \right).
		\]
		For $i > (d-x)$, by symmetry, 
		\[
		v_t( (\mathbf{1}_{d-x}, \mathbf{2}_{x}) - e_i) = v_t(\mathbf{1}_{d-x+1}, \mathbf{2}_{x-1}) = v_t(x-1)
		\]
		but 
		\[
		v_t( (\mathbf{1}_{d-x}, \mathbf{2}_{x}) + e_i) = 0.
		\]
		Similarly, for $i \leq (d-x)$
		\[
		v_t( (\mathbf{1}_{d-x}, \mathbf{2}_{x}) - e_i) = v_t(\mathbf{1}_{d-x}, \mathbf{2}_{x}) = v_t(x)
		\]
		and 
		\[
		v_t( (\mathbf{1}_{d-x}, \mathbf{2}_{x}) + e_i) = v_t(\mathbf{1}_{d-x-1}, \mathbf{2}_{x+1}) = v_t(x+1).
		\]
		\fi
		
	\end{proof}

	\begin{figure}
		\centering
		\fbox{\includegraphics[width = 0.5\textwidth]{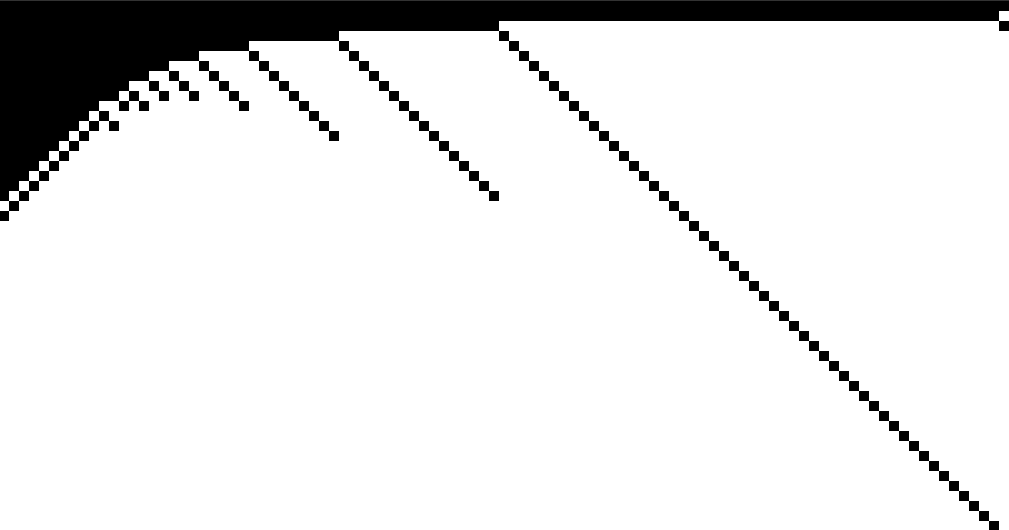}}
		\caption{The parallel toppling odometer for $s_0 \equiv 2 d + (d-1)$ when $d = 100$ and $M=2$. A black pixel in row $t$ and column $x$ indicates that $v_t(x) = v_{t-1}(x) + 1$.  The top row is $t = 0$ and $t$ increases from top to bottom. The left column is $x = 1$ and $x$ increases from left to right. }
		\label{fig:sizetwo_dim_1_odometer}
	\end{figure}
	
	See Figure \ref{fig:sizetwo_dim_1_odometer} for a display of the odometer throughout the parallel toppling process when $s^{(d)}_0 \equiv 2d + (d-1)$ in dimension $d=100$. Visually, a contiguous block of decreasing size fires at each step, followed by a ripple of outwards firings. For $t > t_d := (\lceil \sqrt{d-1} \rceil+1)$, the firing block appears to decreases by one every step. In particular, if $a_t$ indexes the right edge of the block at time $t$, then
	$a_1 = d$ and 
	\[
	a_t =
	\begin{cases}
	\lfloor \frac{ d - 1}{t-1} \rfloor & \mbox{ for $t \leq t_d$ } \\
	a_{t-1} - 1 &\mbox{ for $t > t_d $}.
	\end{cases}
	\]
	This leads to a simple formula for $v_t$. %and $s_t$ when $s_0 = 2 d + (d-1)$.  
	
	\begin{prop} \label{prop:base_case}
		For all $d \geq 1$, when $s_0^{(d)} = 2 d + (d-1)$,  the radially reparameterized parallel toppling odometer has the following form. For all $t \geq 1$,
		\begin{equation} \label{eq:continuous_block}
		v_t(x) =
		\begin{cases}
		v_{t-1}(x) + 1  &\mbox{ for $x \leq a_{t}$} \\
		v_{t-1}(x)  &\mbox{ for $a_{t} < x \leq a_{t-1}$}.
		\end{cases} 
		\end{equation}
		And for each $t' < t$ and $a_{t'-1} \geq x > a_{t'}$
		\begin{equation} \label{eq:ripple}
		v_{t}(x) = v_{t-1}(x-1).
		%v_{t}(x) = v_{t-1}(x) + (v_{t-1}(x-1)-v_{t-1}(x))
		\end{equation}

		%\begin{equation}
		%\begin{aligned}
		%v_{t-1}(x) &= v_{t-1}(x + 1) + 1 \\
		%& \implies \\
		%v_{t}(x) &= v_{t}(x+1),
		%\end{aligned}
		%\end{equation}
		
	\end{prop}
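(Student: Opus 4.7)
My plan is to verify the explicit formulae by induction on $t$, using the radial form of Lemma \ref{pt_ind_lemma}:
\[
v_{t+1}(x) = \Big\lfloor \frac{(3d-1) + (d-x)\,v_t(x) + (d-x)\,v_t(x+1) + x\,v_t(x-1)}{2d} \Big\rfloor,
\]
which follows from $s_0 \equiv 2d + (d-1)$ and the radial Laplacian identity just established (with the convention $v_t(-1) = v_t(d+1) = 0$). The base case $t=1$ is immediate: every site has $s_0 \geq 2d$, so $v_1 \equiv 1$ on $\{0,\dots,d\}$, consistent with $a_1 = d$ and leaving the ripple clause vacuous.

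For the induction step I would fix $x$ and case-split on the zone it occupies at time $t+1$: the firing block $[0, a_{t+1}]$, the newly opened hold zone $(a_{t+1}, a_t]$, the freshest ripple $(a_t, a_{t-1}]$, or an older ripple $(a_{t'}, a_{t'-1}]$ with $t' < t$. In each case I substitute the inductive values of $v_t(x-1), v_t(x), v_t(x+1)$ into the recurrence and compute the floor. The cleanest scenario is the firing-block interior, where all three neighbors equal $t$ and the recurrence collapses to $v_{t+1}(x) = t + \lfloor (3d-1 - xt)/(2d) \rfloor$; this equals $t+1$ precisely when $xt \leq d - 1$, which is the arithmetic origin of $a_{t+1} = \lfloor (d-1)/t \rfloor$ in the $t \leq t_d$ regime. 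Near the right edge of the block and through the ripples, the rule $v_t(y) = v_{t-1}(y-1)$ substitutes an explicit value from the previous time slice, and the floor lands on exactly the prediction.

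The main obstacle is the transition at $t = t_d := \lceil \sqrt{d-1}\rceil + 1$ from $a_t = \lfloor (d-1)/(t-1)\rfloor$ to $a_t = a_{t-1} - 1$. While $t(t-1) \leq d-1$, the gap $(d-1)/(t-1) - (d-1)/t$ exceeds one, so successive floors $\lfloor (d-1)/(t-1)\rfloor$ really do drop by more than one; once $t(t-1) > d-1$ the drop is exactly one. Pinpointing the crossover as $t_d$ is an elementary floor manipulation, but the case $t = t_d$ itself requires checking that both definitions of $a_{t_d}$ coincide and that the recurrence at the outermost firing site produces the predicted value. The boundary cases $x = 0, d$ are absorbed by the padding convention, and once all cases are verified, uniqueness of $v_t$ given $v_0 \equiv 0$ closes the induction.
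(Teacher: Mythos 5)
Your plan is correct and follows essentially the same route as the paper's proof: induction on $t$ via the radial recurrence from Lemma \ref{pt_ind_lemma}, a zone-by-zone case split (firing block versus the ripple intervals $(a_{t'},a_{t'-1}]$), the firing criterion $tx\le d-1$ (the paper's $g(x)=(d-1)-tx\ge 0$), and the floor estimate $\frac{d-1}{t-1}-\frac{d-1}{t}\le 1$ to handle the crossover at $t_d$. The only difference is one of completeness, not of method: the paper's Step 2 carries out the explicit computation of $s_t(x)$ and $s_t(x-1)$ in the ripple zones that your sketch defers to ``the floor lands on exactly the prediction.''
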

	
	\begin{proof}
		We induct on $t$. Since $s_0 \geq 2 d$, $v_1 \equiv 1$.  Suppose \eqref{eq:continuous_block}  and \eqref{eq:ripple} hold for all $t' \leq t$.

		{\bf Step 1: \eqref{eq:continuous_block}} \\
		By strong induction for $t' \leq t$, \eqref{eq:continuous_block} implies $v_t(x) = t$ for $x \leq a_t$. Thus, 
		\[
		s_t(x) = \begin{cases}
		2 d + (d-1) - t x  &\mbox{ for $x < a_t$} \\
		2 d + (d-1) - t x - (d-x) &\mbox{ for $x = a_t$}.
		\end{cases}
		\]
		Let $g(x) := (d-1) - t x$. If $g(x) \geq 0$, $v_{t+1}(x) = v_t(x) + 1$, otherwise $v_{t+1}(x) = v_t(x)$. When $a_{t+1} < x \leq a_t$, $g(x) < 0$. Indeed,  $g(a_{t+1}) \leq (t-1)$ and $g(x+1) - g(x) = - t$.

		As $g$ is increasing in $x$, it remains to check $g(a_{t+1}) \geq 0$ for all $t$.  
		If $x \leq  \frac{ d - 1}{t}$ then $g(x) \geq 0$. 
		% Hence, if $(t+1) \leq t_{d}$, $g(a_{t+1}) \geq 0$.
		If $(t+1) > t_d$, then
		\[
		\frac{d - 1}{t-1}  -  \frac{ d - 1}{t} \leq \frac{d-1}{\sqrt{d-1}\left(\sqrt{d-1}+1\right)} \leq 1,
		\]
		thus 
		\begin{align*}
		a_{t+1} &= a_{t} - 1  \\ 
		&\leq \frac{d-1}{t-1} -1  \\
		%&\leq \frac{ d - 1}{t} + 1 - 1 \\
		&\leq \frac{ d - 1}{t}. 
		\end{align*}
	%	Thus $g(x) \geq 0$ for all $x \leq a_{t+1}$. 
		%and 
		%$v_{t+1}(x) = v_{t}(x)$ for $a_{t+1} < x \leq a_t$. 

		{\bf Step 2: \eqref{eq:ripple}}
		
		Now, take  $a_{t'-1} \geq x > a_{t'}$ for  $1 \leq t' \leq (t-1)$. If $v_{t}(x-1) = v_{t}(x) + 1$, then by strong induction for $t'\leq t$,
		\eqref{eq:ripple} and \eqref{eq:continuous_block} imply that  $v_{t}(x-2) = v_{t}(x-1) = t'$ and $v_{t}(x) = t'-1 = v_{t}(x+1)$.  Thus, 
		\begin{align*}
		s_t(x) &\geq 2d + (d-1) - (t' - 2) x  \\
		&\geq  2d + (d-1) - (t'-2) a_{t'-1} \\
		&\geq 2d + (d-1) - (d-1)  \\
		&= 2d.
		\end{align*}
		However,  
		\begin{align*}
		s_t(x-1) &= 2 d + (d-1) - t'x - d + x \\
		&= 2 d -1 - (t'-1) x \\
		&\leq 2 d -1,
		\end{align*}
		as $t' \geq 1$.
		%which in turn implies
		%,  which implies
		%\[
		%v_{t+1}(x) = v_t(x) + 1 = v_{t}(x-1) = v_{t+1}(x-1).
		%\]
		If $v_{t}(x-1) = v_{t}(x) = v_{t}(x+1) = (t'-1)$, then
		\begin{align*}
		s_t(x) &\leq 2 d + (d-1) - (t'-1) x  \\
		&< 2 d + (d-1) - (t'-1) (a_{t'}) \\
		&\leq 2d.
		\end{align*} 
		%if $v_{t}(x) = v_{t+1}$
		
		%When $x = a_t$,  $(d-1) - (t-1) x \geq 0$. 
		%Thus for $x = a_t + 1$, 
		%\begin{align*}
		%(d-1) - (t-2) x &= (d-1) - (t-1) x + x \\
		%&= (d-1) - (t-1) (a_t + 1) + (a_t + 1) \\
		%&= (d-1) - (t-1) a_t - (t-1) + a_t + 1 \\
		%&\geq a_t + 1 - (t-1)
		%\end{align*}

	\end{proof}

	\section{Odometer regularity when \texorpdfstring{$d = 1$}{d=1}} \label{sec:base_regularity}
		From here onward, suppose $s_0 \equiv 2 d$. We start the inductive proof of Theorem \ref{the_theorem} by establishing some regularity of the odometer in the critical dimension $d=d_0=1$.   In the next section, we inductively use dimensional reduction to show that $d \geq 2$ sandpiles inherit this regularity. This regularity ensures that the dynamics of lower-dimensional sandpiles agree with their higher-dimensional embeddings. 
		
		When reading Section \ref{sec:dim_reduction} below, the reader should observe that whenever Proposition \ref{base_dimension} (or something close to it) holds, dimensional reduction 
		follows. For example, if a version of this result is established in every critical dimension $d_0 \geq 1$,  
		then dimensional reduction follows for all sandpiles of the form $2 d + (d_0 - 1)$ in dimensions $d > d_0$. 
		Proposition \ref{prop:base_case} should be understood as a step in this direction.

	\begin{prop}\label{base_dimension}
		Recall the definition of $\tau_{j}$ from Theorem \ref{the_theorem}. For all $M \geq 2$ and $d \geq 1$, the odometer maintains the following properties throughout the parallel toppling process.
		\begin{description}
			\item[Self-similarity]  For each $1 \leq j \leq M$ and $t \leq \tau_j$ 
			\begin{equation} \label{base_dimension:consistency}
			v_t^{(M)}(\mathbf{x}) =  v_t^{(j)}(\mathbf{x}-(M-j))  \mbox{ for $\mathbf{x} > M-j$}.
			\end{equation}
			\item[Weak facet compatibility] For all $\mathbf{x}_i \geq 2$, $t \geq 1$, $i \geq 0$, $j \geq 0$ and $i+j+1=d$
			\begin{equation} \label{base_dimension:weak_compatability}
			\begin{aligned}
			v_t^{(M)}(\mathbf{x}_i, 1, \mathbf{1}_j) &= v_t^{(M)}(\mathbf{x}_i, 2, \mathbf{1}_j)+ 1  \\
			& \implies \\
			v_{t+1}^{(M)}(\mathbf{x}_i, 1, \mathbf{1}_j) &= v_{t+1}^{(M)}(\mathbf{x}_i, 1, \mathbf{1}_j).
			\end{aligned} 
			\end{equation}
			\item[Strong facet compatibility] For all $\mathbf{x}_i \geq 2$, $j \geq 0$, $i < d$, 
			($t < \tau_M$ and $i \geq 0$) or ($i \geq 1$ and $t \geq \tau_M$)
			\begin{equation} \label{base_dimension:strong_derivative}
			v_t^{(M)}(\mathbf{x}) - v_t^{(M)}(\mathbf{x} + 2 e_{i+1}) \leq 2
			\end{equation}
			and
			\begin{equation} \label{base_dimension:strong_compatability}
			\begin{aligned} 
			v_t^{(M)}(\mathbf{x}_i, 1, \mathbf{1}_j) &= v_t^{(M)}(\mathbf{x}_i, 2, \mathbf{1})+ 1 \\
			& \implies \\
			v_{t+1}^{(M)}(\mathbf{x}_i, 1, \mathbf{1}_j) &= v_{t}^{(M)}(\mathbf{x}_i, 1, \mathbf{1}_j) \\
			v_{t+1}^{(M)}(\mathbf{x}_i, 2, \mathbf{1}_j) &= v_{t}^{(M)}(\mathbf{x}_i, 2, \mathbf{1}_j)+1.
			\end{aligned}
			\end{equation}

			\item[Strong topple control] 
			For all $t \geq \tau_{M-1}$, 
			\begin{equation} \label{base_dimension:strong_topple}
			\sup_{\mathbf{x} \in \mathcal{S}_M} \left(v_{t+2}(\mathbf{x}) - v_{t}(\mathbf{x})\right) \leq 1.
			\end{equation}

		\end{description}
		
	\end{prop}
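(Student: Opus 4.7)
The plan is a simultaneous strong induction on the cube side length $M$, carried out in the base dimension $d=1$, in which all four regularity properties are established together. Within the inductive step for a given $M$ I would also induct on $t$, propagating each property one time step forward via Lemma \ref{pt_ind_lemma} and the comparison tools of Section \ref{sec:prelims}. For the base case $M=2$, Proposition \ref{prop:base_case} (specialized to $d=1$, $s_0 \equiv 2$) furnishes an explicit formula for $v_t$ on the two-site interval, from which all four claims can be read off directly.

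For the inductive step, fix $M \geq 3$ and assume the proposition for sizes $< M$. I would prove self-similarity at parameter $j < M$ for times $t \leq \tau_j$ by a parallel induction on $t$ of the conjunction
\begin{equation*}
v_t^{(M)}(x) = v_t^{(j)}(x-(M-j)) \text{ for } x > M-j, \qquad v_t^{(M)}(M-j) = v_t^{(M)}(M-j+1).
\end{equation*}
The first identity forces the two dynamics to share neighbor sums in the interior; the second makes the external neighbor of $x = M-j+1$ in the larger cube imitate the reflecting boundary condition that the smaller cube uses there. Lemma \ref{pt_ind_lemma} immediately propagates the first identity once both hold, and the auxiliary equality propagates via the weak facet compatibility already verified at earlier times.

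In $d=1$ the facet compatibility claims collapse to the single implication: if $v_t(1) = v_t(2) + 1$, then $v_{t+1}(1) = v_t(1)$ and $v_{t+1}(2) = v_t(2) + 1$. Reflection (so $v(0) = v(1)$) gives $s_t(1) = 2 - v_t(1) + v_t(2)$ and $s_t(2) = 2 - 2 v_t(2) + v_t(1) + v_t(3)$, so the weak half is immediate from $s_t(1) = 1 < 2$, and the strong half reduces to $v_t(2) - v_t(3) \leq 1$, which follows from the strong derivative together with the hypothesis $v_t(1) = v_t(2) + 1$. For the strong derivative $v_t(x) - v_t(x+2) \leq 2$ itself I would invoke Lemma \ref{general_difference_comparison} with $w = 2 e_1$ and $g \equiv 2$: the interior case is automatic, on the dissipating side Corollary \ref{cor:derivative_bound} (with $k=1$, using the standard quadratic bound on $v_\infty$ on an interval) supplies the comparison, and on the reflecting side weak facet compatibility rules out the one configuration that could violate it.

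Finally, strong topple control $v_{t+2} - v_t \leq 1$ for $t \geq \tau_{M-1}$ reduces, via Lemma \ref{topple_limit}, to the single time $t = \tau_{M-1}$; there, self-similarity identifies the interior of the size-$M$ cube with a size-$(M-1)$ cube (to which the outer inductive hypothesis applies), and the dissipating boundary region is handled directly using $s_0 \equiv 2$ together with the derivative bound. The main obstacle I expect is bookkeeping the joint induction: each property at time $t+1$ depends on several of the others at times $\leq t$ (and strong facet compatibility even refers to $\tau_M$), so fixing the correct order for verifying the four properties at each time step — likely strong derivative, then facet compatibility, then self-similarity, then topple control — and carefully enumerating the boundary cases of Lemma \ref{general_difference_comparison} is where the bulk of the technical work will concentrate.
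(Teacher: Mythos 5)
Your overall skeleton (induction on $M$, then on $t$, establishing all four properties jointly with Lemma \ref{pt_ind_lemma}, Lemma \ref{general_difference_comparison}, Corollary \ref{cor:derivative_bound}, and Lemma \ref{topple_limit}) matches the paper's, but two of your concrete mechanisms fail.

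First, the auxiliary identity $v_t^{(M)}(M-j) = v_t^{(M)}(M-j+1)$ that you propose to carry alongside self-similarity is false in the range where you need it. Take $M=4$, $j=3$: one computes $v_4^{(4)} = (4,3,3,2)$, so $v_4^{(4)}(1) = 4 \neq 3 = v_4^{(4)}(2)$, while $\tau_3 = 7$, so self-similarity must still be propagated for three more steps. The matching of the reflecting neighbor sum $v(1)+v(2)$ in the small cube with $v(0)+v(2) = v(1)+v(2)$ read inside the large cube genuinely breaks, and the proof has to confront the case $v_{t-1}^{(M)}(1) = v_{t-1}^{(M)}(2)+1$. The paper's resolution --- which is really the crux of the whole argument --- is to show that in this case the floor in Lemma \ref{pt_ind_lemma} still returns the same value on both sides, by invoking strong facet compatibility simultaneously in the size-$M$ and size-$(M-1)$ systems and deriving a contradiction from the only odometer history consistent with a mismatch. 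Without something playing this role your induction on self-similarity does not close.

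Second, your choice $g \equiv 2$ in Lemma \ref{general_difference_comparison} cannot pass the dissipating boundary check: there you would need (essentially) $v_t(M-1) \leq 2$, whereas $v_t(M-1)$ grows linearly in $t$ long before $\tau_M$ (already $v_5^{(4)}(3) = 3$ with $\tau_4 > 5$), and Corollary \ref{cor:derivative_bound} only gives $v_t(x)-v_t(x+1) \leq x$, not a constant. The paper instead takes the harmonic comparison function $g(x) = 2x$, proves $v_t(x) - v_t(x+2) \leq 2x$, and extracts the constant bound $\leq 2$ only at $x=1$, which is exactly the instance needed for strong facet compatibility; the uniform bound $v_t(x)-v_t(x+2)\le 2$ you implicitly aim for is simply not true away from the reflecting face. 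Your treatment of weak/strong facet compatibility given the $x=1$ derivative bound is correct and matches the paper, and your reduction of strong topple control to $t=\tau_{M-1}$ via Lemma \ref{topple_limit} is the right first move, though note that self-similarity only holds up to time $\tau_{M-1}$, so transferring the times $\tau_{M-1}+1,\tau_{M-1}+2$ to the size-$(M-1)$ cube requires an extra one-step computation (and separate handling of the sites $x=1,2$ near the reflecting center, not the dissipating boundary), as in the paper's three-case analysis.
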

	
	\begin{proof}[Proof of Proposition \ref{base_dimension} for $d=d_0 = 1$]
		The proof proceeds by induction on $M$ and then on $t$. When $M=2$, 
		\begin{center}
			\begin{tabular}{l|l|l}
				& $v_t^{(M)}(1)$ & $v_t^{(M)}(2)$ \\ \hline
				$t=1$ & 1 & 1  \\ 
				$t=2$ & 2 & 1  \\ 
				$t=3$ & 2 & 2 \\
				$t=4$ & 3 & 2 
			\end{tabular}
		\end{center}
		and $v_1^{(M-1)}(1) = v_2^{(M-1)}(1) = 1$ which verifies the base case. Now, let $M$ be given and note that
		$v_1^{(M)}(x) = 1$ for all $1 \leq x \leq M$ and $v_2^{(M)}(x) = 2$ for $1 \leq x < M$
		and $v_2^{(M)}(M) = 1$. Hence, suppose \eqref{base_dimension:consistency},\eqref{base_dimension:weak_compatability},\eqref{base_dimension:strong_derivative},\eqref{base_dimension:strong_compatability}, hold for $(M-1)$ for all $t \geq 1$ 
		and suppose they hold for $M$ for all $t' \leq (t-1)$. 
		We verify each inductive step. 
		
		\subsubsection*{Self-similarity: \eqref{base_dimension:consistency}}
		By strong induction, it suffices to show $v_t^{(M)}(x) = v_t^{(M-1)}(x-1)$ for $x \geq 2$.
		By Lemma \ref{pt_ind_lemma}, 
		\[
		v_{t}^{(M)}(x) = 1 + \lfloor \frac{v_{t-1}^{(M)}(x+1) + v_{t-1}^{(M)}(x-1)}{2} \rfloor
		\]
		for $x > 1$. 
		Hence, by \eqref{base_dimension:consistency} at $(t-1)$, for $x > 2$, 
		\[
		v_{t}^{(M)}(x) = 1 + \lfloor \frac{v_{t-1}^{(M-1)}(x) + v_{t-1}^{(M-1)}(x-2)}{2} \rfloor = v_t^{(M-1)}(x-1).
		\]
		For $x=1$, we have reflection at the origin, 
		\[
		v_{t}^{(M-1)}(1) = 1 + \lfloor \frac{v_{t-1}^{(M-1)}(2) + v_{t-1}^{(M-1)}(1)}{2} \rfloor = 1 + \lfloor \frac{v_{t-1}^{(M)}(3) + v_{t-1}^{(M)}(2)}{2} \rfloor.
		\]
		Hence, if $v_{t-1}^{(M)}(1) = v_{t-1}^{(M)}(2)$, then $v_{t}^{(M-1)}(1) = v_{t}^{(M)}(2)$. 
		
		When $v_{t-1}^{(M)}(1) = v_{t-1}^{(M)}(2) + 1$, we instead use strong facet compatibility 
		in both layers. If $v_{t-1}^{(M-1)}(1) = v_{t-1}^{(M-1)}(2)$, then $v_{t}^{(M-1)}(1) = v_{t-1}^{(M-1)}(1) + 1$ and we are done, so suppose not. Since sites topple at most once per time step by Lemma \ref{topple_limit}, 
		the odometer must then be,  for some $v \geq 2$:
		\begin{center}
			\begin{tabular}{l | l | l | l }
				& $v_{\cdot}^{(M)}(1)$ & $v_{\cdot}^{(M)}(2)$ & $v_{\cdot}^{(M)}(3)$ \\  \hline
				$t-2$ & $v$ & $v$ & $v-1$    \\
				$t-1$  & $v+1$ & $v$  & $v-1$  \\   
				$t$  & $v+1$ & $v+1$ & $\geq (v-1)$
			\end{tabular}
		\end{center}
		This contradicts strong facet compatibility for $v_t^{(M-1)}(1)$ from $(t-2) \to (t-1)$, which 
		we can use as $t \leq \tau_{M-1}$ and hence $(t-1) < \tau_{M-1}$.
		
		\subsubsection*{Weak facet compatibility: \eqref{base_dimension:weak_compatability}}
		% This is especially simple in one dimension. 
		If $v_t(1) = v_t(2) + 1$, then $\Delta v_t(1) = -1$ and so $v_{t+1}(1) = v_{t}(1)$. 
		
		%In higher dimensions, as facets consist of more than one point, the proof of this requires analysis of a burn-in %phase, which we carry out in the next section. .

		\subsubsection*{Strong facet compatibility: \eqref{base_dimension:strong_derivative} and \eqref{base_dimension:strong_compatability}} 
		
		We use Lemma \ref{general_difference_comparison} to show \eqref{base_dimension:strong_derivative}.
		The function $g(x) = 2 x$ is harmonic in the interior of the interval so it suffices to check 
		the dissipating and reflecting boundaries. We control the dissipating boundary using $t < \tau_M$
		and the reflecting boundary with \eqref{base_dimension:weak_compatability}. 
		
		As $t < \tau_M$, $v_t(M) \leq (M-1)$ and hence by Corollary \ref{cor:derivative_bound}, 
		$v_t(M-1) \leq v_{t}(M) + (M-1) \leq 2 (M-1)$. For the reflecting boundary, i.e, $x=1$, we check  
		\begin{align*}
		\sum_{y \sim x} v_t(y) - \sum_{y' \sim (x+2)} v_t(y) &= (v_t(1) - v_t(2)) + (v_t(2) - v_t(4))  \leq (v_t(1) - v_t(2)) + 4.
		\end{align*}
		If $v_t(1) - v_t(2) = 1$, then $v_{t+1}(1) = v_{t}(1)$ by weak facet compatibility. 
		Otherwise, $\sum_{y \sim x} v_t(y) - \sum_{y' \sim (x+2)} v_t(y) \leq 4$ and we conclude that 
		\[
		v_t(x) - v_t(x + 2) \leq 2 x. 
		\]
		Taking $x=1$, this implies 
		\[
		\Delta v_t(2) \geq -2 v_t(2) + v_t(1) + v_t(1) -2
		\geq 0, 
		\]
		which shows \eqref{base_dimension:strong_compatability}. 
		
		\subsubsection*{Strong topple control: \eqref{base_dimension:strong_topple}}
		By Lemma \ref{topple_limit}, it suffices to show 
		\[ 
		\sup_{x \in \mathcal{S}_M} \left(v_{\tau_{M-1}+2}(x) - v_{\tau_{M-1}}(x)\right) \leq 1
		\]
		First observe that \eqref{base_dimension:consistency} for $v_t^{(M)}$ and \eqref{base_dimension:strong_topple} for $v_t^{(M-1)}$ imply that
		\begin{equation} \label{eq:topple_time_lower_bound}
		\tau_{M-1} \geq \tau_{M-2} + 2. 
		\end{equation}
		Suppose for sake of contradiction that 
		\[
		\left(v_{\tau_{M-1}+2}(x) - v_{\tau_{M-1}}(x)\right) = 2
		\]
		for some $1 \leq x \leq M$. Lemma \ref{topple_limit} then implies
		that some neighbor $y \sim x$ must have toppled twice previously. Pick 
		the maximal such $y$.  We consider three cases for $y$. 
		
		{\bf Case 1: $y \geq 3$} \\
		We first note that $v_{\tau_{M-1}+1}^{(M)}(y) = v_{\tau_{M-1}+1}^{(M-1)}(y-1)$.
		Indeed, by \eqref{base_dimension:consistency}, as $(y-1) \geq 2$, 
		\begin{align*}
		v_{\tau_{M-1}+1}^{(M)}(y) &= 1 + \lfloor \frac{v_{\tau_{M-1}}^{(M)}(y+1) + v_{\tau_{M-1}}^{(M)}(y-1)}{2} \rfloor  \\
		&= 1 + \lfloor \frac{v_{\tau_{M-1}}^{(M-1)}(y) + v_{\tau_{M-1}}^{(M-1)}(y-2)}{2} \rfloor  \\
		&= 	v_{\tau_{M-1}+1}^{(M-1)}(y-1)
		\end{align*}
		Hence, 	
		\[
		2 = v_{\tau_{M-1}+1}^{(M)}(y)-v_{\tau_{M-1}-1}^{(M)}(y) = v_{\tau_{M-1}+1}^{(M-1)}(y-1)-v_{\tau_{M-1}-1}^{(M-1)}(y-1),
		\]
		which contradicts \eqref{base_dimension:strong_topple} for $v_t^{(M-1)}$.

		{\bf Case 2: $y = 2$} \\
		We claim that $v_{\tau_{M-1}+1}^{(M)}(2) = v_{\tau_{M-1}+1}^{(M-1)}(1)$, in which 
		case we can use the argument of Case 1. If not, then $v_{\tau_{M-1}}^{(M)}(2) = v_{\tau_{M-1}}^{(M)}(3)+1$
		but $v_{\tau_{M-1}}^{(M)}(1) = v_{\tau_{M-1}}^{(M)}(2)+1$. This implies that either 
		\[
		v_{\tau_{M-1}-1}^{(M)}(1) = v_{\tau_{M-1}-1}^{(M)}(2)+2
		\]
		or
		\begin{align*}
		v_{\tau_{M-1}-1}^{(M)}(1) &= v_{\tau_{M-1}-1}^{(M)}(2)+1 \\
		 &\mbox{ and }  \\  
		v_{\tau_{M-1}}^{(M)}(1) &= v_{\tau_{M-1}-1}^{(M)}(1)+1
		\end{align*}
		both which contradict weak facet compatibility.

		{\bf Case 3: $y = 1$} \\
		In this case, the odometer near the center must be, for some $v \geq 1$,
			\begin{center}
			\begin{tabular}{l | l | l | l }
				& $v_{\cdot}^{(M)}(1)$ & $v_{\cdot}^{(M)}(2)$ & $v_{\cdot}^{(M)}(3)$ \\  \hline
				$\tau_{M-1}-1$ & $v$ & $v$ & $v$    \\
				$\tau_{M-1}$  & $v+1$ & $v+1$  & $v$  \\   
				$\tau_{M-1}+1$  & $v+2$ & $v+1$ & $\geq (v)$
			\end{tabular}
		\end{center}
		This shows $v^{(M)}_{\tau_{M-1}-2}(2) = v-1$. Indeed, 
		if $v^{(M)}_{\tau_{M-1}-2}(1) = v-1$, then as $\Delta v^{(M)}_{\tau_{M-1}-2}(1) \geq 0$
		 $v^{(M)}_{\tau_{M-1}-2}(2) = v-1$. If $v^{(M)}_{\tau_{M-1}-2}(1) = v$, then $\Delta v^{(M)}_{\tau_{M-1}-2}(1) \leq -1$ and $v^{(M)}_{\tau_{M-1}-2}(2) = v-1$. Hence, 
		 \[
		 v^{(M-1)}_{\tau_{M-1}}(1) = v^{(M)}_{\tau_{M-1}}(2) = v^{(M)}_{\tau_{M-1}-2}(2)+2= v^{(M-1)}_{\tau_{M-1}-2}(2)+2,
		 \]
		 which contradicts \eqref{base_dimension:strong_topple} for $v_t^{(M-1)}$ using \eqref{eq:topple_time_lower_bound}.

	\end{proof}
	
	Note that the comparison principle for sandpiles (see, for example, Proposition 3.3 in \cite{bou2021convergence}) shows
	\[
	v_{\infty}(x) = \frac{1}{2} \left( M(M+1) - x(x-1) \right), 
	\]
	and so $v_{\infty}(x) - v_\infty(x+1) = x$. Hence we must use an assumption like $t < \tau_M$ for strong facet compatibility.

	\section{Odometer regularity and dimensional reduction} \label{sec:dim_reduction}
	We now prove Proposition \ref{base_dimension} for $d \geq 2$ together with dimensional reduction,
	\begin{equation} \label{eq:reduction_gen_d}
	v_t^{(d,M)}(\mathbf{x}_{d-1},1) = v_{t}^{(d-1,M)}(\mathbf{x}_{d-1}),
	\end{equation}
	 by strong induction on $M$, $d$, and $t$. Specifically, given $M$, $d$, and $t$, suppose
	 \[ \eqref{base_dimension:consistency},\eqref{base_dimension:weak_compatability},\eqref{base_dimension:strong_derivative},\eqref{base_dimension:strong_compatability}, \eqref{base_dimension:strong_topple}
	 \]
	 hold for
	\begin{align*}
	v_{t'}^{(d', M')} &\quad \mbox{for all $M' \geq 1$, $t' \geq 1$, $d' < d$}, \\
	v_{t'}^{(d, M')} &\quad \mbox{for all $M' < M$, $t' \geq 1$}, \\
	v_{t'}^{(d, M)} &\quad \mbox{for all $t' < t$}.
	\end{align*}
	We also suppose  \eqref{eq:reduction_gen_d} holds for $v_{t-1}^{(d',M')}$ for all $d' \geq 2$ and $M' \leq M$.
	Indeed, $v_{1}^{(d,M)} \equiv 1$ for all $d \geq 1$ and $M \geq 1$.

	\subsection{Dimensional reduction}
	We start the induction in time by proving dimensional reduction given odometer regularity at $(t-1)$. 
	Let $\mathbf{x}_{d-1}$ be given and pick the smallest $d > i \geq 0$ so that
	$(\mathbf{x}_i, \mathbf{1}_{d-i}) = (\mathbf{x}_{d-1},1)$. 
	By symmetry, 
	\begin{align}
	\notag  \Delta^{(d)} v_{t-1}^{(d)}(\mathbf{x}_{i}, \mathbf{1}_{d-i}) &= -2 d v_{t-1}^{(d)}(\mathbf{x}_{i}, \mathbf{1}_{d-i}) \\  
	\notag  &+  \sum_{\mathbf{y}_{i} \sim \mathbf{x}_{i}} v_{t-1}^{(d)}(\mathbf{y}_{i}, \mathbf{1}_{d-i}) \\
	\notag  &+ (d-i) \left( v_{t-1}^{(d)}(\mathbf{x}_{i}, \mathbf{1}_{d-i})   + v_{t-1}^{(d)}(\mathbf{x}_{i},2, \mathbf{1}_{d-i-1})  \right).
	\end{align}
	
	We consider two cases at time $(t-1)$.

	{\bf Case 1}: $v_{t-1}^{(d)}(\mathbf{x}_{i}, \mathbf{1}_{d-i}) = v_{t-1}^{(d)}(\mathbf{x}_{i},2, \mathbf{1}_{d-i-1})$
	
	 By \eqref{eq:reduction_gen_d} at $(t-1)$, $v_{t-1}^{(d)}(\mathbf{x}_i, \mathbf{1}_{d-i}) = v_{t-1}^{(i+1)}(\mathbf{x}_{i},1)$.
	Thus,
	\begin{align*}
	\Delta^{(d)} v_{t-1}^{(d,M)}(\mathbf{x}_{i}, \mathbf{1}_{d-i}) &= -2 i v_{t-1}^{(i+1)}(\mathbf{x}_{i},1) +  \sum_{\mathbf{y}_{i} \sim \mathbf{x}_{i}} v_{t-1}^{(i+1)}(\mathbf{y}_{i},1)  \\
	&=  \Delta^{(i+1)} v_{t-1}^{(i+1,M)}(\mathbf{x}_{i},1)
	\end{align*}
	which concludes this case as $v_{t}^{(d)} = v_{t-1}^{(d)} + 1(\Delta^{(d)} v_{t-1}^{(d)} \geq 0)$.

	{\bf Case 2: $v_{t-1}^{(d)}(\mathbf{x}_{i}, \mathbf{1}_{d-i}) = v_{t-1}^{(d)}(\mathbf{x}_{i},2, \mathbf{1}_{d-i-1}) + 1$} \\
	If $i \leq (d-2)$, then \eqref{base_dimension:weak_compatability} for $(t-1) \to t$
	for both $v_{t-1}^{(i+1)}$ and $v_{t-1}^{(d)}$ imply that 
	\[
	v_{t-1}^{(i+1)}(\mathbf{x}_i,1) = v_{t-1}^{(d)}(\mathbf{x}_{i}, \mathbf{1}_{d-i})  = v_{t}^{(d)}(\mathbf{x}_{i}, \mathbf{1}_{d-i}) = v_{t}^{(i+1)}(\mathbf{x}_i,1).
	\]
	If $i = (d-1)$, then \eqref{base_dimension:strong_compatability} and \eqref{eq:reduction_gen_d} at $(t-1)$ and $(t-2)$ imply that 
	\[
	v_{t-1}^{(d-1)}(\mathbf{x}_{d-1}) = v_{t-1}^{(d)}(\mathbf{x}_{d-1},1) = v_{t-2}^{(d)}(\mathbf{x}_{d-1},1)+1 = 
	v_{t-2}^{(d-1)}(\mathbf{x}_{d-1})+1.
	\]	
	If $(t-2) \geq \tau_{M-1}$, \eqref{base_dimension:strong_topple} for $v_t^{(i)}$ and $v_t^{(d)}$  imply that
	\[
	v_{t}^{(d-1)}(\mathbf{x}_{d-1}) =	v_{t-1}^{(d-1)}(\mathbf{x}_{d-1}) = v_{t-1}^{(d)}(\mathbf{x}_{d-1},1) = v_{t}^{(d)}(\mathbf{x}_{d-1},1).
	\]
	If $(t-2) < \tau_{M-1}$, then \eqref{base_dimension:consistency} and \eqref{eq:reduction_gen_d} for $v_{t-1}^{(M-1)}$ show
	\[
	v_{t-1}^{(d,M)}(\mathbf{x}_{d-1},2) = v_{t-1}^{(d,M-1)}(\mathbf{x}_{d-1}-1,1) = v_{t-1}^{(d-1,M-1)}(\mathbf{x}_{d-1}-1).
	\]
	Similarly, 
	\[
	v_{t-1}^{(d,M)}(\mathbf{x}_{d-1},1) = v_{t-1}^{(d-1,M)}(\mathbf{x}_{d-1}) = v_{t-1}^{(d-1,M-1)}(\mathbf{x}_{d-1}-1).
	\]
	Therefore, for all $t' \leq \tau_{M-1}$, 
	\begin{equation} \label{eq:reflecting_boundaries}
	v_{t'}^{(d,M)}(\mathbf{x}_{d-1},2) = v_{t'}^{(d,M)}(\mathbf{x}_{d-1},1),
	\end{equation}
	this however contradicts the case we are in.

\subsection{Odometer regularity for \texorpdfstring{$d \geq 2$}{d >= 2}}

We verify each inductive step.

\subsubsection*{Self-similarity: \eqref{base_dimension:consistency}}
As $\eqref{base_dimension:consistency}$ holds for $(M-1)$ at $t$, it suffices to show 
that if $t \leq \tau_{M-1}$ and $\mathbf{x} > 1$,
\begin{equation}\label{eq:self_sim1}
v_t^{(M)}(\mathbf{x}) = v_t^{(M-1)}(\mathbf{x}-1).
\end{equation}
We split verification of this into cases. 

{\bf Case 1: $\mathbf{x} > 2$} \\ 
As \eqref{eq:self_sim1} holds for $(t-1)$, by Lemma \ref{pt_ind_lemma}, 
\begin{align*}
v_{t}^{(M)}(\mathbf{x}) &= \lfloor \frac{ s_0(\mathbf{x}) + \sum_{\mathbf{y} \sim \mathbf{x}} v_{t-1}^{(M)}(\mathbf{y}) }{2 d} \rfloor \\
&= \lfloor \frac{ s_0(\mathbf{x}-1) + \sum_{\mathbf{y} \sim (\mathbf{x}-1)} v_{t-1}^{(M-1)}(\mathbf{y})}{2 d} \rfloor \\
&= v_t^{(M-1)}(\mathbf{x}-1).
\end{align*}

{\bf Case 2:  $\mathbf{x} = (\mathbf{x}_j, \mathbf{2})$ for $\mathbf{x}_j > 2$ and $0 \leq j < d$ } \\
We show that if $\Delta v_{t-1}^{(M)}(\mathbf{x}) \geq 0$, then  $\Delta v_{t-1}^{(M-1)}(\mathbf{x}-1) \geq 0$.
First, decompose the Laplacian into a sum of discrete second differences,
\[
\Delta v_{t-1}^{(M)}(\mathbf{x}) = \sum_{i=1}^d \Delta_{(i)} v_{t-1}^{(M)}(\mathbf{x}),
\]
where 
\[
\Delta_{(i)} v_{t-1}^{(M)}(\mathbf{x}) =  - 2 v_{t-1}^{(M)}(\mathbf{x}) + v_{t-1}^{(M)}(\mathbf{x}+e_i) + v_{t-1}^{(M)}(\mathbf{x}-e_i). 
\]
Observe that \eqref{base_dimension:consistency} at $(t-1)$ implies, 
$\Delta_{(i)} v_{t-1}^{(M)}(\mathbf{x}) = \Delta_{(i)} v_{t-1}^{(M-1)}(\mathbf{x}-1)$
for all $i \leq j$ and for $i > j$, 
\[
\Delta_{(i)} (v_{t-1}^{(M-1)}(\mathbf{x}-1) - v_{t-1}^{(M)}(\mathbf{x})) = v_{t-1}^{(M-1)}(\mathbf{x}-e_i-1) - v_{t-1}^{(M)}(\mathbf{x}-e_i).
\]
By reflectional symmetry, for each $i > j$, 
\[
v_{t-1}^{(M-1)}(\mathbf{x}-e_i-1) = v_{t-1}^{(M-1)}(\mathbf{x}-1) = v_{t-1}^{(M)}(\mathbf{x}),
\]
%and $v_{t-1}^{(M)}(\mathbf{x}-e_i) = v_{t-1}^{(M)}(\mathbf{x}_j, \mathbf{2}, 1)$ 
thus 
\begin{equation} \label{eq:difference}
\Delta_{(i)} (v_{t-1}^{(M-1)}(\mathbf{x}-1) - v_{t-1}^{(M)}(\mathbf{x}))  = v_{t-1}^{(M)}(\mathbf{x}) - v_{t-1}^{(M)}(\mathbf{x}-e_i).
\end{equation}
If $v_{t-1}^{(M)}(\mathbf{x}) = v_{t-1}^{(M)}(\mathbf{x}-e_i)$ for all $i > j$, we are done, so suppose otherwise.

Take $i > j$ where $\eqref{eq:difference} \not = 0$. By \eqref{base_dimension:strong_compatability}, 
\begin{equation} \label{eq:loaded_spring}
v_{t-2}^{(M)}(\mathbf{x}) =  v_{t-1}^{(M)}(\mathbf{x}) =  v_{t-1}^{(M)}(\mathbf{x}-e_i)-1 = v_{t-2}^{(M)}(\mathbf{x}-e_i).
\end{equation}
%but
%\[
%v_{t-2}^{(M)}(\mathbf{x}) = v_{t-2}^{(M)}(\mathbf{x}-e_i).
%\]
By \eqref{base_dimension:consistency} and \eqref{base_dimension:strong_compatability} for $v_{t-2}^{(M-1)}$, if
\[
v_{t-1}^{(M-1)}(\mathbf{x}-1) =  v_{t-1}^{(M-1)}(\mathbf{x}-1+e_i)+1
\]
then 
\[
v_{t-1}^{(M)}(\mathbf{x}) = v_{t-1}^{(M-1)}(\mathbf{x}-1) =  v_{t-2}^{(M-1)}(\mathbf{x}-1)+1 = v_{t-2}^{(M)}(\mathbf{x}) + 1
%v_{t-1}^{(M)}(\mathbf{x}) = v_{t-2}^{(M)}(\mathbf{x}) +1 =v_{t-2}^{(M)}(\mathbf{x}-e_i) +1
\]
which contradicts \eqref{eq:loaded_spring}. Moreover, 
by \eqref{base_dimension:strong_compatability}
we must have for each neighbor $(\mathbf{y}-e_i) \sim (\mathbf{x} - e_i)$, $v_{t-1}^{(M)}(\mathbf{y}) \geq v_{t-2}^{(M)}(\mathbf{y}-e_i)$. Thus, 
\begin{equation} 
\Delta v_{t-1}^{(M-1)}(\mathbf{x}-1) \geq  \Delta v_{t-2}^{(M)}(\mathbf{x}-e_i) \geq 0.
\end{equation}

\subsubsection*{Strong facet compatibility: \eqref{base_dimension:strong_derivative} and \eqref{base_dimension:strong_compatability}} 
We first use
\begin{equation} \label{eq:two_derivative_bound}
v_t(\mathbf{x}_{i-1}, 1, \mathbf{1}_j) - v_t(\mathbf{x}_{i-1},3, \mathbf{1}_j) \leq 2
\end{equation}
together with the inductive hypotheses 
to show \eqref{base_dimension:strong_compatability}, then we verify \eqref{base_dimension:strong_derivative} below.

Suppose $v_t(\mathbf{x}_{i-1}, 1, \mathbf{1}_j) = v_{t}(\mathbf{x}_{i-1},2,\mathbf{1}_j) + 1$. 
 By \eqref{base_dimension:strong_compatability} at $(\mathbf{x}_i, 1, \mathbf{1}_j)$ from $(t-1) \to t$, $\Delta v_{t-1}(\mathbf{x}_i, 1, \mathbf{1}_j) \geq 0$. Hence, it suffices to show 
 \[
 \Delta v_{t} \left( \mathbf{x}_i, 2, \mathbf{1}_j \right) \geq  \Delta v_{t-1} \left( \mathbf{x}_{i},1, \mathbf{1}_j \right).
 \]
	We use symmetry to decompose each Laplacian;
 \begin{align}
 \notag \Delta v_{t} \left( \mathbf{x}_{i-1},2, \mathbf{1}_{j} \right) &= -2 d v_{t} \left( \mathbf{x}_{i-1},2, \mathbf{1}_{j} \right) \\
 \label{eq:laplace_11} &+ \sum_{j'=1}^{(i-1)} \left( v_{t}(\mathbf{x}_i + e_{j'}, 2, \mathbf{1}_j) + v_{t}(\mathbf{x}_i - e_{j'}, 2, \mathbf{1}_j) \right) \\
  \label{eq:laplace_12} &+ v_t(\mathbf{x}_i, 1, \mathbf{1}_j) + v_t(\mathbf{x}_i, 3, \mathbf{1}_j) \\
  \label{eq:laplace_13} &+ j \left( v_t(\mathbf{x}_i, 2, \mathbf{1}_j) + v_t(\mathbf{x}_i,2,2,\mathbf{1}_{j-1} ) \right)
 \end{align}
 while 
 \begin{align}
\notag \Delta v_{t-1} \left( \mathbf{x}_{i-1},1, \mathbf{1}_{j} \right) &= -2 d v_{t-1} \left( \mathbf{x}_{i-1},1, \mathbf{1}_{j} \right) \\
 \label{eq:laplace_21} &+ \sum_{j'=1}^{(i-1)} \left( v_{t-1}(\mathbf{x}_i + e_{j'}, 1, \mathbf{1}_j) + v_{t-1}(\mathbf{x}_i - e_{j'}, 1, \mathbf{1}_j) \right) \\
 \label{eq:laplace_22} &+ v_{t-1}(\mathbf{x}_i, 1, \mathbf{1}_j) + v_{t-1}(\mathbf{x}_i, 2, \mathbf{1}_j) \\
 \label{eq:laplace_23} &+ j \left( v_{t-1}(\mathbf{x}_i, 1, \mathbf{1}_j) + v_{t-1}(\mathbf{x}_i,2,\mathbf{1}_j) \right).
 \end{align}
%If $\mathbf{x}_i - e_{j'} \not \geq 2$, then  $v_{t}(\mathbf{x}_i - e_{j'}, 2, \mathbf{1}_j) =  v_{t}(\mathbf{x}_i, 1, \mathbf{1}_j)$
%Otherwise,   
  By \eqref{base_dimension:strong_compatability}  from $(t-1) \to t$, $v_{t-1} \left( \mathbf{x}_{i-1},1, \mathbf{1}_{j} \right) = v_{t} \left( \mathbf{x}_{i-1},2, \mathbf{1}_{j} \right)$. Also \eqref{base_dimension:strong_compatability} shows that  
  each $\mathbf{y}_i \sim \mathbf{x}_i$ with $\mathbf{y}_i \geq 2$, $v_{t}(\mathbf{y}_i,2,\mathbf{1}_j) \geq v_{t-1}(\mathbf{y}_i,1,\mathbf{1}_j)$. If $\mathbf{x}_i - e_{j'} \not \geq 2$, $v_{t}(\mathbf{x}_i - e_{j'}, 2, \mathbf{1}_j) = v_{t}(\mathbf{x}_i, 1, \mathbf{1}_j) \geq v_{t-1}(\mathbf{x}_i, 1, \mathbf{1}_j)$.
	This shows that $\eqref{eq:laplace_11} \geq \eqref{eq:laplace_21}$.
  	Next, \eqref{eq:two_derivative_bound} implies 
  \[
  \eqref{eq:laplace_12} \geq 2 v_t(x_i, 1, \mathbf{1}_j) - 2,
  \]
  while \eqref{base_dimension:strong_compatability} from $(t-1) \to t$ 
  implies 
    \[
  \eqref{eq:laplace_22} \leq 2 v_{t-1}(x_i, 1, \mathbf{1}_j),
  \]
  hence $\eqref{eq:laplace_12} \geq \eqref{eq:laplace_22}$.
  Finally, by \eqref{base_dimension:strong_compatability} from $(t-1) \to t$, 
  \[
  v_t(x_i,2,2,\mathbf{1}_{j-1}) \geq v_{t-1}(x_i,2,\mathbf{1}_j)
  \]
  which implies $\eqref{eq:laplace_13} \geq \eqref{eq:laplace_23}$.

	We now verify \eqref{base_dimension:strong_derivative} for different
	regimes of $t$.

{\bf Case 1: $t < \tau_M$, $i \geq 1$} \\
We use Lemma \ref{general_difference_comparison} as in the proof for $d=1$
to show that 
\[
v_t(\mathbf{x}) - v_t(\mathbf{x} + 2 e_i) \leq 2 x_i
\]
for all $d \leq i \leq 1$ and $\mathbf{x} \in \mathcal{S}_M$.  Indeed as $t < \tau_M$
\begin{equation} \label{eq:derivative_bound}
v_{t}(\mathbf{x}) - v_{t}(\mathbf{x} + e_j) \leq x_j
\end{equation}
for all $\mathbf{x} \in \mathcal{S}_M$ and $1 \leq j \leq d$. Hence, 
$v_t(\mathbf{x}) - v_t(\mathbf{x} + 2 e_j) \leq 2 (M-1)$ on $\partial_{2 e_j} \mathcal{S}_M$ for all $1 \leq j \leq d$.
The reflecting boundary is checked in the same way as $d=1$, using weak facet compatibility in higher dimensions.

{\bf Case 2: $t \geq \tau_M$, $i \geq 2$} \\ 
Here we show that
\[
v_t(\mathbf{x}) - v_t(\mathbf{x} + 2 e_i) \leq 2 x_i
\]
for all $d \geq i \geq 2$ and $x_1 \geq 2$. We again use Lemma \ref{general_difference_comparison}
except the region in which we have the derivative bound shrinks and therefore our boundaries change. 
The dissipating boundary gets smaller,  $\mathcal{B}^{(disp)} := \{x \in \mathcal{S}_M : x_j = M-1 \mbox{ for some $2 \leq j \leq d$}\}$
and the reflecting boundary remains the same except for the removal of a single point, $\mathbf{1}$. 
By axis monotonicity, $\sup_{x \in \mathcal{B}^{(disp)}} v_t(x) \leq v_t(M, \mathbf{1}) \leq M$.
Checking the reflective boundary is as in $d=1$ except for the point $(2,\mathbf{1}_{d-1})$.
We show directly that 
\begin{equation} \label{eq:reflecting_point}
v_t(2,\mathbf{1}_{d-1}) \leq v_t(3,2,\mathbf{1}_{d-2}) + 2.
\end{equation}
Suppose for sake of contradiction that $v_t(2,\mathbf{1}_{d-1}) = v_t(3,2,\mathbf{1}_{d-2}) + 2$
and $\Delta v_t(2,\mathbf{1}_{d-1}) \geq 0$ but $\Delta v_t(3,2,\mathbf{1}_{d-1}) < 0$. 
As \eqref{base_dimension:strong_derivative} has been verified for all $x \in \mathcal{S}_M$ other than $(2, \mathbf{1}_{d-1})$, 
\eqref{base_dimension:strong_compatability} holds for $(3,2,\mathbf{1}_{d-1})$ and so %hence by \ref{eq:reflecting_boundaries}
$v_t(3,2,\mathbf{1}_{d-2}) = v_{t}(3,\mathbf{1}_{d-1})$. 
Then, by definition of the symmetric Laplacian, weak facet compatibility, and axis monotonicity, 
\begin{align*}
\Delta v_t(2,\mathbf{1}_{d-1}) &=  -2d v_t(2, \mathbf{1}_{d-1}) \\
&+ v_t(3, \mathbf{1}_{d-1}) + v_t(1, \mathbf{1}_{d-1}) \\
&+ (d-1) \left( v_t(2,\mathbf{1}_{d-1}) + v_t(2,2,\mathbf{1}_{d-2}) \right) \\
&\leq -2 v_t(2, \mathbf{1}_{d-1}) + v_t(3, \mathbf{1}_{d-1}) + v_t(1, \mathbf{1}_{d-1}) \\
&\leq -1,
\end{align*}
which is a contradiction.

\subsubsection*{Weak facet compatibility: \eqref{base_dimension:weak_compatability}}
The only remaining case is 
\[
v_t^{(d)}(1, \mathbf{1}_j) = v_{t}^{(d)}(2,\mathbf{1}_j) + 1.
\]
By symmetry, 
\[
\Delta v_t^{(d)}(1, \mathbf{1}_j) = d (v_t^{(d)}(2, \mathbf{1}_j) - v_t^{(d)}(1, \mathbf{1}_j)) = -d.
\]

\subsubsection*{Strong topple control: \eqref{base_dimension:strong_topple}}
We use strong topple control established in dimension $(d-1)$.  Suppose for sake of contradiction 
there exists $\mathbf{x} \in \mathcal{S}_M^{(d)}$ with $v_{\tau_{M-1}+2}(\mathbf{x}) - v_{\tau_{M-1}}(\mathbf{x}) = 2$.
Pick $\mathbf{x} = (\mathbf{x}_{d-1}, x)$ so that $x \geq 1$ is minimal. 

{\bf Case 1: $x = 1$}

By dimensional reduction at time $\tau_{M-1}$, $v_{\tau_{M-1}}^{(M,d)}(\mathbf{x}_{d-1},1) = v_{\tau_{M-1}}^{(M,d-1)}(\mathbf{x}_{d-1})$.
By the parabolic least action principle, 
\[
v_{\tau_{M-1}+2}^{(M,d-1)}(\mathbf{x}_{d-1}) \geq v_{\tau_{M-1}+2}^{(M,d)}(\mathbf{x}_{d-1},1),
\]
which contradicts \eqref{base_dimension:strong_topple} for $v_{t}^{(M,d-1)}$.

{\bf Case 2: $x = 2$} \\
By \eqref{eq:reflecting_boundaries}, 
\[
v_{\tau_{M-1}}^{(M,d)}(\mathbf{x}_{d-1},1) = v_{\tau_{M-1}}^{(M,d)}(\mathbf{x}_{d-1},2),
\]
which in turn, by axis monotonicity, implies
$v_{\tau_{M-1}+2}^{(M,d)}(\mathbf{x}_{d-1},1) = v_{\tau_{M-1}}^{(M,d)}(\mathbf{x}_{d-1},1)+2$,
which contradicts the minimality of $x$.

{\bf Case 3: $x \geq 3$}  \\
Some neighbor $\mathbf{y} \sim \mathbf{x}$ must have toppled twice previously. As $x \geq 3$, $\mathbf{y} = (\mathbf{y}_{d-1}, y)$ for $y \geq 2$. The same argument for $d=1$ when $y \geq 2$ then implies, $v_{\tau_{M-1}+1}^{(M,d)}(\mathbf{y}) =  v_{\tau_{M-1}+1}^{(M-1,d)}(\mathbf{y})$ which contradicts \eqref{base_dimension:strong_topple} for $v_{t}^{(M-1,d)}$.

\section*{Declarations}

\begin{itemize}
\item Funding: This research was completed while the author was a graduate student in the Statistics department at the University of Chicago. No external funding was received. 
\item Conflict of interest/Competing interests: Not applicable. 
\item Ethics approval: Not applicable. 
\item Consent to participate: Not applicable. 
\item Consent for publication: Not applicable. 
\item Availability of data and material: Not applicable
\item Code availability: Code to produce the figures in this article is included in the arXiv upload.
\item Authors' contributions: This is a single-author paper. 
\end{itemize}

	\bibliography{bibliography.bib}
	\bibliographystyle{amsalpha}
\end{document}